\definecolor{maroon}{rgb}{.69,.188,.376}
\definecolor{darkgreen}{rgb}{0,.5,0}
\definecolor{darkblue}{rgb}{0,0,.5}
\definecolor{magenta}{rgb}{1,0,1}
\renewcommand{\Re}{\mathrm{Re}\,}
\newtheorem{theorem}{Theorem}[section]
\newtheorem{lemma}[theorem]{Lemma}
\newtheorem{corollary}[theorem]{Corollary}
\newtheorem{proposition}[theorem]{Proposition}
\newtheorem{remark}[]{Remark}
\numberwithin{equation}{section}
\definecolor{Red}{rgb}{1,0,0}
\definecolor{Blue}{rgb}{0,0,1}
\definecolor{Olive}{rgb}{0.41,0.55,0.13}
\definecolor{Yarok}{rgb}{0,0.5,0}
\definecolor{Green}{rgb}{0,1,0}
\definecolor{MGreen}{rgb}{0,0.8,0}
\definecolor{DGreen}{rgb}{0,0.55,0}
\definecolor{Yellow}{rgb}{1,1,0}
\definecolor{Cyan}{rgb}{0,1,1}
\definecolor{Magenta}{rgb}{1,0,1}
\definecolor{Orange}{rgb}{1,.5,0}
\definecolor{Violet}{rgb}{.5,0,.5}
\definecolor{Purple}{rgb}{.75,0,.25}
\definecolor{Brown}{rgb}{.75,.5,.25}
\definecolor{Grey}{rgb}{.7,.7,.7}
\definecolor{Black}{rgb}{0,0,0}
\newcommand{\ignore}[1]{{}}
\date{\today}
\begin{document}

\title{Continuity of Zero-Hitting Times of Bessel Processes and Welding Homeomorphisms of SLE$_\kappa$}
\author{
Dmitry Beliaev
\thanks{University of Oxford. Email:\url{belyaev@maths.ox.ac.uk}}
\and Atul Shekhar%
  \thanks{University Lyon 1.
    Email: \url{atulshekhar83@gmail.com}}
\and Vlad Margarint
\thanks{NYU Shanghai.
    Email: \url{margarint@nyu.edu}}
 }


\maketitle
\begin{abstract}
We consider a family of Bessel Processes that depend on the starting point $x$ and dimension $\delta$, but are driven by the same Brownian motion. Our main result is that almost surely the first time a process hits $0$ is jointly continuous in $x$ and $\delta$, provided $\delta\le 0$. As an application, we show that the SLE($\kappa$) welding homeomorphism is continuous in $\kappa$ for $\kappa\in [0,4]$. Our motivation behind this is to study the well known problem of the continuity of SLE$_\kappa$ in $\kappa$. The main tool in our proofs is random walks with increments distributed as infinite mean Inverse-Gamma laws. 
\end{abstract}

\noindent {\em Keywords:} Bessel Processes, Schramm-Loewner-Evolution, Welding Homeomorphism.

\noindent {\em AMS 2010 Subject Classification:} 30C55, 60J65, 60H10.

\section{Introduction} \label{intro}

In this article we prove the joint continuity of level zero hitting times of Bessel processes w.r.t. its starting point and its dimension. For a real $\delta$, the Bessel process of dimension $\delta$ started from $x \in \mathbb{R}\setminus\{0\}$ is defined as the solution to the stochastic differential equation (SDE) 
\begin{equation}\label{Bessel-eqn}
d Z_{t}=dB_{t}+\frac{\delta-1}{2} \frac{1}{Z_{t}} d t, \hspace{2mm} Z_{0}=x,    
\end{equation}
where $B_t$ is a standard Brownian motion. Let $\zeta_\delta^x := \inf\{t> 0 | Z_t(x) = 0\}$. Also, set $\zeta_\delta^0 =0$. It is well known that $\zeta_\delta^x < \infty$  almost surely if and only if $\delta <2$. For a fixed starting point $x$, the random variable $\zeta_\delta^x$ is very well understood. There is an extensive literature covering the subject, see e.g. \cite{lawler2018notes}. We are interested in $\zeta= \{\zeta_{\delta}^x\}_{x,\delta }$ considered as a stochastic process indexed by $x$ and $\delta$. Our main result is the following theorem: 

\begin{theorem}\label{main-result-1}
The function $(x, \delta)\mapsto \zeta_\delta^x$ is almost surely jointly continuous in $x\in \mathbb{R}$ and $\delta \leq 0$. 
\end{theorem}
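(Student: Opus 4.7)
The plan is to work on the common probability space carrying the driving Brownian motion $B$. By the standard pathwise comparison theorem for one-dimensional SDEs, almost surely $(x,\delta)\mapsto \zeta_\delta^x$ is non-decreasing in each argument on $[0,\infty)\times(-\infty,0]$ (a smaller starting point gives a pointwise smaller process; a smaller $\delta$ a more negative drift). The symmetry $(Z,B,x)\mapsto(-Z,-B,-x)$ of \eqref{Bessel-eqn} lets us restrict to $x\ge 0$. Monotonicity then reduces joint continuity at $(x_0,\delta_0)$ to verifying the two one-sided ``corner'' limits $\lim_{\epsilon\downarrow 0}\zeta_{\delta_0\pm\epsilon}^{x_0\pm\epsilon}=\zeta_{\delta_0}^{x_0}$ (the upper corner being relevant only when $\delta_0<0$).

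The key pathwise tool is a Brownian upper bound: subtracting $B_t+x$ from \eqref{Bessel-eqn} and using that the drift $(\delta-1)/(2z)$ is non-positive for $z>0$ and $\delta\le 1$ yields $Z^{\delta,x}_t\le B_t+x$ on $[0,\zeta_\delta^x]$, and hence
\[
\zeta_\delta^x \ \le\ \tau^x\ :=\ \inf\{t>0:\ B_t=-x\}
\]
\emph{uniformly} in $\delta\le 0$. Since $\tau^x\downarrow 0$ almost surely as $x\downarrow 0$ (Blumenthal's zero-one law at the origin), this at once gives joint continuity at $x=0$ for all $\delta\le 0$ simultaneously.

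For an interior point $(x_0,\delta_0)$ with $x_0>0$, set $\bar T:=\zeta_{\delta_0}^{x_0}$ and fix $T<\bar T$. On $[0,T]$ the reference path is bounded below away from $0$, the drift $(\delta-1)/(2z)$ is jointly Lipschitz in $(z,\delta)$ on the relevant slab, and standard SDE stability gives $Z^{\delta,x}_t\to Z^{\delta_0,x_0}_t$ uniformly on $[0,T]$ as $(x,\delta)\to(x_0,\delta_0)$. This alone handles the lower corner: for $(x,\delta)$ sufficiently close to $(x_0,\delta_0)$ the perturbed path stays positive on $[0,T]$, so $\zeta_\delta^x\ge T$; letting $T\uparrow\bar T$ together with monotonicity yields $\zeta_{\delta_0-\epsilon}^{x_0-\epsilon}\uparrow\bar T$. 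For the upper corner, one restarts the Brownian domination at time $T$: the perturbed requires additional time at most $\inf\{s>0:B_{T+s}-B_T\le -Z^{\delta,x}_T\}$, and since $Z^{\delta,x}_T\to Z^{\delta_0,x_0}_T\downarrow 0$ as $T\uparrow\bar T$ while $B_{T+\cdot}-B_T$ inherits immediate downward oscillation from the strong Markov property at the stopping time $\bar T$, this yields $\zeta_{\delta_0+\epsilon}^{x_0+\epsilon}\downarrow\bar T$.

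The main obstacle is upgrading these pointwise almost-sure statements to a single full-measure event on which the map is jointly continuous at \emph{every} $(x_0,\delta_0)$: the argument above shows continuity at a fixed $(x_0,\delta_0)$ with probability one, but for a two-parameter monotone family the exceptional null set can in principle depend on the point, and for a monotone function, separate continuity need not imply joint continuity. This is where the random-walk viewpoint announced in the abstract enters: passing to the dyadic hitting times $T_k:=\inf\{t:Z^{\delta_0,x_0}_t\le 2^{-k}x_0\}$, whose rescaled increments $T_k-T_{k-1}$ are i.i.d.\ in distribution with an infinite-mean Inverse-Gamma law, one replaces the delicate sample-path oscillation estimates near $\bar T$ by sums of heavy-tailed i.i.d.\ variables to which strong-law and Borel--Cantelli arguments apply \emph{uniformly} in $(x_0,\delta_0)$, thereby producing the desired full-measure event and carrying the above local argument to a true joint continuity statement.
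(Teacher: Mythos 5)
Your setup is sound: the pathwise monotonicity of $\zeta_\delta^x$ in $(x,\delta)$ for $\delta\le 0$, the symmetry in $x$, the Brownian upper bound $\zeta_\delta^x\le\tau^x$ valid for all $\delta\le 1$, and the reduction of joint continuity of a coordinatewise-monotone field to corner limits all hold. You also correctly identify the genuine difficulty: the null sets coming from strong Markov at $\bar T=\zeta_{\delta_0}^{x_0}$, and from $\tau^x_s\downarrow 0$ as $x\downarrow 0$ at the restart time $s$, depend on $(x_0,\delta_0)$, so one needs a single full-measure event covering the whole uncountable parameter set. The gap is that your proposed resolution does not deliver this. The dyadic hitting times $T_k:=\inf\{t:Z^{\delta_0,x_0}_t\le 2^{-k}x_0\}$ are attached to a single trajectory and remain indexed by $(x_0,\delta_0)$, so they cannot supply uniformity in that pair. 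Worse, their increments are passage times between two \emph{positive} levels; by Bessel scaling $T_k-T_{k-1}$ equals $(2^{-k}x_0)^2$ times the passage time from level $1$ to level $1/2$, which has \emph{finite} mean since the drift is bounded away from $0$ on $[1/2,1]$. These are not infinite-mean Inverse-Gamma variables (only the hitting time of $0$ itself is), so the strong-law/Borel--Cantelli program you sketch would not even produce the $n\log n$ growth that the paper's Lemma 3.3 is designed around. Note also that $\tau^x_s\downarrow 0$ as $x\downarrow 0$ fails at local minima of $B$, so the Brownian domination alone cannot give the simultaneity in the restart time $s$ that the interior-point argument needs; it throws away exactly the singular Bessel drift toward the origin that the paper exploits.

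The paper's random walk is a different object and the difference is precisely what closes the gap. It proves that, almost surely, simultaneously for all $\kappa\in[0,4]$ and \emph{all starting times} $s\in[0,1]$, one has $T_\kappa(s,0+)=s$ (eq.\ (4.1)); this is the statement your restart argument implicitly needs. To get it, the paper builds, for each $n$, a random walk in the \emph{time} variable: $s_0^n=0$, $s_{k+1}^n=T_4(s_k^n,2x_n)$, whose increments are genuinely i.i.d.\ $x_n^2\cdot\text{Inverse-Gamma}(1,\tfrac12)$ (zero-hitting times, infinite mean). Lemma 3.3 gives the $n\log n$ rate for the partial sums, and with $x_n,k_n,\lambda_n$ tuned so that $x_n^2 k_n\log k_n>2$ and $\sum_n k_n x_n^2/\lambda_n<\infty$, Borel--Cantelli shows that for infinitely many $n$ the times $\{s_k^n\}_{k\le k_n}$ cover $[0,1]$ with all gaps of length at most $\lambda_n\to 0$. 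Every starting time $s$ lands in some gap, and the flow property together with monotonicity forces $T_4(s,0+)-s\le\lambda_n$. That covers all $s$ (and, after reducing to $\kappa=4$ by monotonicity in $\delta$, all $\kappa$) at once. With (4.1) in hand, right-continuity of $x\mapsto T_\kappa(x)$ at $x_0$ follows from the flow identity $T_\kappa(y)-T_\kappa(x_0)=T_\kappa\bigl(T_\kappa(x_0),h^\kappa(0,T_\kappa(x_0),y)\bigr)-T_\kappa(x_0)$ applied at $s=T_\kappa(x_0)$, with no appeal to strong Markov at a point-dependent stopping time. Your proposal identifies the right obstruction but does not construct the right object to overcome it.
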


\begin{remark}
The continuity of $\zeta_\delta^x$ w.r.t. $x$ is not expected to hold for $\delta > 0$. For example, when $\delta =1$, $\zeta_1^x$ is a L\'evy subordinator process which in particular has jumps. However, the almost sure continuity of $\zeta_\delta$ at a fixed $x$ follows easily for all $\delta < 2$ using a Laplace transform computation, see \cite[Lemma 5]{altman2018bismut} for details. The continuity of $\zeta$ for $\delta\leq 0$ also implies that $x\mapsto \zeta_\delta^x$ is a continuous increasing bijection of $[0,\infty)$. For $\delta \in (0, 3/2]$, this function is injective, but not surjective (or equivalently continuous). For $\delta \in (3/2,2)$, this will not be injective with positive probability, see \cite[Proposition 2.11]{lawler2018notes}. \\

\end{remark}

The process $\zeta$ is very closely related to Schramm-Loewner-Evolutions (SLEs). We provide an application of Theorem \ref{main-result-1} to the continuity in $\kappa$ for the welding homeomorphism of SLE$_\kappa$ for $\kappa \in [0,4]$. Let us first recall some definitions and mention our initial motivation to consider this problem. 

\vspace{2mm}

Let $\mathbb{H}= \{x+ iy \hspace{1mm}|\hspace{1mm} y>0 \}$ be the upper half plane. Given a simple curve $\gamma:[0, T] \rightarrow \mathbb{H} \cup\{0\}$ such that $\gamma_{0}=0$
and $\gamma_{t} \in \mathbb{H} \hspace{2mm}$ for all $t>0$, the welding homeomorphism associated to $\gamma$ is defined as follows. Let $f: \mathbb{H} \rightarrow \mathbb{H} \setminus \gamma[0, T]$ be the (unique) conformal map such that $\lim _{z \rightarrow 0} f(z)=\gamma_{T}$
and $f(z)=z+O(1)$ as $z \rightarrow \infty .$ The map $f$ extends continuously to $\overline{\mathbb{H}}$ (see Chapter $2$ in \cite{pommerenke2013boundary}). For some real numbers $x_{T}^{-}< 0 <x_{T}^{+}$, 
$f$ maps both $\left[x_{T}^{-}, 0\right]$ and $\left[0, x_{T}^{+}\right]$ to $\gamma[0, T].$
The intervals $\left(-\infty, x_{T}^{-}\right]$ and $\left[x_{T}^{+}, +\infty \right)$ are similarly mapped under $f$ to $(-\infty,0]$ and $[0, \infty)$ respectively. The welding homeomorphism $\phi=\phi_{\gamma}:[0, \infty) \rightarrow[0, \infty)$
associated to $\gamma$ is defined by the relation $f(x)^{2}=f(-\phi(x))^{2},$ i.e. for $x \in\left[0, x_T^{+}\right]$, $\phi(x)$ is the unique point such that $f(x)=f\left(-\phi(x)\right)$, and for $ x \in \left[x_{T}^{+}, \infty \right),$ $\phi(x)$ is the unique point such that $f(-\phi(x))=-f(x).$ The homeomorphism $\phi$ contains information about the curve $\gamma$. For example, when $\phi$ is quasisymmetric, it uniquely characterizes $\gamma$, see \cite{lehto-book}. \\

For $\kappa \in [0,4]$, it was proven in \cite{schramm2005basic} that SLE$_{\kappa}$ is almost surely a simple curve, call it $\gamma^\kappa$. We will write $\phi^\kappa$ for the associated welding homeomorphism. We ask ourselves whether these homeomorphisms $\phi^\kappa$ are continuous in $\kappa$. Our motivation to ask this is to study the related problem of continuity of $\gamma^\kappa$ in $\kappa$. To best of our knowledge it is an open problem for the full range of $\kappa \in [0,\infty)$ or even for $\kappa\in [0,4]$, see \cite{viklund2014continuity} for a result for $\kappa \in [0, 8(2-\sqrt{3})) \cup (8(2+\sqrt{3}),\infty) $ and \cite{friz2019regularity} for a recent progress for $\kappa < 8/3$. Our approach to this problem is based on the following heuristic argument. \\
It follows from the results of \cite{schramm2005basic} and \cite{jones2000removability} that SLE$_{\kappa}$, for $\kappa \in [0,4)$, are almost surely conformally removable. This implies that $\phi^\kappa$ almost surely characterize the curve $\gamma^\kappa$ uniquely. In other words, the homeomorphism $\phi^\kappa$ contain all the information about the curve $\gamma^\kappa$. Heuristically speaking, this suggests that the continuity of $\phi^\kappa$ in $\kappa$ should imply the continuity of $\gamma^\kappa$ in $\kappa$ for $\kappa < 4$. Note however that this roadmap is as of now incomplete. This is because $\phi^\kappa$ are not quasisymmetric (otherwise this would imply that $\gamma^k$ is a quasislit, and then a result of Rohde-Marshall \cite{marshall2005loewner} would imply that Loewner driving function of $\gamma^k$, which is $\sqrt{k}B$, is $1/2$-H\"older). It is interesting to ask for fine properties of $\phi^\kappa$ which are satisfied uniformly in $\kappa$ and which recovers $\gamma^\kappa$ uniquely. We plan to address this in our future projects. For the purpose of present article we only prove the continuity of $\phi^\kappa$ in $\kappa$. \\

Asking for the continuity of $\phi^\kappa$ in $\kappa$ is not yet well posed if we work with the above definition of $\phi^\kappa$. This is because it is a priori not known whether $\gamma^\kappa$ are curves (let alone simple curves) simultaneously for all $\kappa \in [0,4]$ (we will often say that that a collection of events $\left\{A_{\alpha}\right\}_{\alpha}$ occur simultaneously in $\alpha$ if $\mathbb{P}[\cap_{\alpha} A_\alpha] =1$). This indeed is itself very closely related to the continuity of $\gamma^\kappa$ in $\kappa$, which is the problem we want to address in the first place. The correct way to formulate this problem is to ask for a continuous modification of the stochastic field $\{\phi^\kappa(x)\}_{x\geq 0, \kappa\in [0,4]}$. Our following theorem answers it.

\begin{theorem} \label{main-result-2} There exists a random field $\psi(\kappa, x):[0,4] \times[0, \infty) \rightarrow[0, \infty)$ such that
\begin{enumerate}
\item Almost surely, $\psi$ is jointly continuous in $(\kappa, x) \in [0,4] \times[0, \infty)$.
\item Simultaneously for all $\kappa \in [0, 4], \psi(\kappa, \cdot)$ is a homeomorphism of $[0, \infty)$. 
\item  $\mathbb{P}\left[\phi^{\kappa}=\psi(\kappa,\cdot)\right]=1$, $\hspace{2mm} \forall \kappa \in [0, 4].$
\end{enumerate}
\end{theorem}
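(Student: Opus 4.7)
The main idea is to realize $\phi^\kappa$ as a jointly continuous functional of Bessel zero-hitting times, so that Theorem~\ref{main-result-1} applies. Couple all the SLE$_\kappa$ using a single Brownian motion $B$, taking the driving function to be $W^\kappa_t := \sqrt{\kappa}\,B_t$. Let $h^\kappa_t(z)$ denote the associated reverse Loewner flow started at $z \in \mathbb{R}$, i.e.\ $\partial_t h^\kappa_t(z) = -2/(h^\kappa_t(z) - W^\kappa_t)$ with $h^\kappa_0(z) = z$, and set
\[
\tau^\kappa(z) := \inf\{\, t > 0 : h^\kappa_t(z) = W^\kappa_t \,\}.
\]
A direct It\^o computation shows that for $\kappa > 0$ the rescaled process $(h^\kappa_t(z) - W^\kappa_t)/\sqrt{\kappa}$ is a Bessel process of dimension $\delta(\kappa) := 1 - 4/\kappa$, driven by the Brownian motion $-B$ and started at $z/\sqrt{\kappa}$; hence
\[
\tau^\kappa(z) \;=\; \zeta^{\,z/\sqrt{\kappa}}_{\,1 - 4/\kappa}.
\]
The range $\kappa \in (0,4]$ corresponds bijectively to $\delta \in (-\infty, 0]$, exactly the regime of Theorem~\ref{main-result-1}. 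The classical reverse-flow characterization of welding (two real points are identified under $h^\kappa_T$ iff their reverse trajectories reach the driving at the same time $\leq T$) then yields
\[
\phi^\kappa(x) = |y| \iff \tau^\kappa(x) = \tau^\kappa(y), \qquad x > 0,\; y < 0.
\]

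The construction of $\psi$ is now mechanical. Joint continuity of $(\kappa, z) \mapsto \tau^\kappa(z)$ on $(0,4] \times \mathbb{R}$ is immediate from Theorem~\ref{main-result-1} composed with the continuous change of variables $(\kappa, z) \mapsto (1 - 4/\kappa,\, z/\sqrt{\kappa})$. The remark following Theorem~\ref{main-result-1} then gives that, simultaneously for all such $\kappa$, $z \mapsto \tau^\kappa(z)$ is a continuous increasing bijection of $[0, \infty)$ onto $[0, \infty)$, and, by the $z \mapsto -z$, $B \mapsto -B$ symmetry of the Bessel SDE, a continuous decreasing bijection of $(-\infty, 0]$ onto $[0, \infty)$. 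Writing $\tau^\kappa_-$ for the restriction of $\tau^\kappa$ to $(-\infty, 0]$ and setting
\[
\psi(\kappa, x) := \bigl|(\tau^\kappa_-)^{-1}\bigl(\tau^\kappa(x)\bigr)\bigr|, \qquad \kappa \in (0,4],\ x \geq 0,
\]
produces a field that is jointly continuous on $(0,4] \times [0,\infty)$ and, for each fixed $\kappa$, a homeomorphism of $[0, \infty)$; the standard fact that inversion of a jointly continuous family of monotone bijections is itself jointly continuous delivers (1), and (2) is built in. Item (3) for $\kappa \in (0,4]$ is then precisely the reverse-flow characterization of welding just recalled.

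The remaining task is to extend $\psi$ continuously down to $\kappa = 0$. Since $W^0 \equiv 0$, the reverse flow is deterministic and $h^0_t(z) = \mathrm{sgn}(z)\sqrt{z^2 - 4t}$, so $\tau^0(z) = z^2/4$ and the natural candidate is $\psi(0, x) = x$, which is indeed the welding of the vertical slit $\gamma^0$. Our change of variables degenerates at $\kappa = 0$ (both $\delta \to -\infty$ and $z/\sqrt{\kappa} \to \infty$ simultaneously), so Theorem~\ref{main-result-1} cannot be invoked directly there; instead the plan is a quantitative SDE-stability estimate controlling $\sup_{t \leq T}|h^\kappa_t(z) - h^0_t(z)| = O(\sqrt{\kappa})$ on compacts, combined with the fact that $h^0_t(z)$ approaches $0$ with infinite slope at $t = z^2/4$, to conclude $|\tau^\kappa(z) - z^2/4| \to 0$ uniformly in $z$ on compacts. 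This $\kappa \downarrow 0$ continuity is the main technical obstacle of the plan; everything else is a clean reduction to Theorem~\ref{main-result-1} together with the reverse-flow characterization of welding.
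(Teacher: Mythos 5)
Your reduction to Theorem~\ref{main-result-1} via the coupling $\tau^\kappa(z) = \zeta^{z/\sqrt{\kappa}}_{1-4/\kappa}$ is exactly the paper's starting point (equation~\eqref{loewner=bessel}), and the paper also proves the continuity at $\kappa = 0$ separately by a comparison argument (Lemma~\ref{gronwall} plus Lemma~\ref{12}), so that part of your plan is sound, if deferred. However, there is a genuine gap in your definition of $\psi$ that makes item~(c) fail.

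The formula $\psi(\kappa,x) = \bigl|(\tau^\kappa_-)^{-1}(\tau^\kappa(x))\bigr|$ identifies $x>0$ with the negative point having the \emph{same} zero-hitting time. The reverse-flow characterization you quote --- ``two real points are identified under $h^\kappa_T$ iff their reverse trajectories reach the driving at the same time $\leq T$'' --- is correct, but it only governs the part of the welding with $\tau^\kappa(x)\le 1$, namely the interval $[0,x_T^+]$ that is mapped onto the curve $\gamma^\kappa[0,1]$. For $x > x_T^+$ (equivalently $\tau^\kappa(x)>1$), the paper's $\phi^\kappa$ is defined by the \emph{symmetric boundary condition} $f(-\phi^\kappa(x)) = -f(x)$, i.e.\ $h^\kappa(0,1,-\phi^\kappa(x)) = -h^\kappa(0,1,x)$, and this is a deterministic function of $\{B_t\}_{t\le 1}$ only. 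Your $\psi(\kappa,x)$ in this regime is instead determined by the condition $T_\kappa(-\psi) = T_\kappa(x) > 1$, which depends on the Brownian increments \emph{after} time~$1$ (after the flow property, this is the statement that two Bessel flows started at time~$1$ from $h^\kappa(0,1,x)>0$ and from $h^\kappa(0,1,-\psi)<0$, driven by sign-opposite copies of a fresh BM, hit zero simultaneously --- a random condition that is not $h^\kappa(0,1,-\psi) = -h^\kappa(0,1,x)$). So $\mathbb{P}[\phi^\kappa = \psi(\kappa,\cdot)] < 1$. What you have constructed is the welding of the infinite curve $\gamma^\kappa[0,\infty)$, not of the truncation $\gamma^\kappa[0,1]$.

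The paper repairs exactly this by introducing the auxiliary fields $\tilde h^{\kappa,\pm}_1$: these equal $h^\kappa(0,1,x)$ when $T_\kappa(x) > 1$ and equal $\pm(T_\kappa(x)-1)$ when $T_\kappa(x)\le 1$, and $\psi$ is defined by $\tilde h^{\kappa,-}_1(-\psi) = -\tilde h^{\kappa,+}_1(x)$. This single equation encodes ``equal hitting times'' on $[0,x_T^+]$ and ``symmetric image under the conformal map'' on $[x_T^+,\infty)$, and the two branches glue continuously at $x_T^+$ precisely because $\tilde h^{\kappa,\pm}_1$ vanish there. Proposition~\ref{Prop 1} then yields continuity and bijectivity of $\tilde h^{\kappa,\pm}_1$, giving (a) and (b), and (c) is checked separately in the two regimes using Lemma~\ref{unif-maps} and the flow property. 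To fix your proof you would need to replace your global hitting-time matching rule by this two-regime definition (or an equivalent one), after which the rest of your argument would carry through.
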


\vspace{2mm}

\begin{remark}
We believe that there is an alternative approach to Theorem \ref{main-result-2} based on Sheffield's Quantuam Zipper. It was proven in \cite{Sheffield_QZ} that welding homeomorphism can be constructed by identifying points with same quantum length. This is also a promising approach, but it does require some additional work to give a rigorous proof. For example, we will need continuity of the  quantum measure $\mu^\gamma$ with respect to the quantum parameter $\gamma$. For $\gamma <2$ (corresponds to $\kappa<4$), this was done in \cite{Jun}.  For $\kappa =4$ or $\gamma =2$ these measures converge to $0$, so one has to consider an appropriate scaling limit (see \cite{APS}). Another issue is that we need all measures $\mu^\gamma$ to be `nice' simultaneously for all $\gamma$, so that we can invert the map $x\mapsto \mu^\gamma([0,x])$ simultaneously for all $\gamma$. All this requires some additional work. We believe that this could be done, but this approach is highly technical for proving the above Theorem which is relatively simple. We thus give a self contained proof of this result using the simpler approach based on Bessel processes.    
\end{remark}

The paper is organized as follows. In the Section \ref{prem}, we recall some basic facts on Loewner theory and Bessel processes. Some technical lemmas are proved in Section \ref{technical}. In the Section \ref{main-proofs} we give the construction of function $\psi$ using an intermediary result Proposition \ref{Prop 1}, and prove Theorem \ref{main-result-1} and Theorem \ref{main-result-2}. Finally, we prove the Proposition \ref{Prop 1} in the Section \ref{prop-proof}.  \\

\vspace{2mm}

\textbf{Acknowledgments:} D.B. was supported by the Engineering
\& Physical Sciences Research Council (EPSRC) Fellowship EP/M002896/1. A.S.  acknowledges the financial support from the European Research Council (ERC) through a project grant LIKO. V.M.  acknowledges the support of NYU-ECNU Institute of Mathematical Sciences at NYU Shanghai.

\section{Preliminaries}\label{prem}

We recall some basic facts from the Loewner theory. Given a curve $\gamma$ as described in the introduction, one can choose a parametrization
of $\gamma$ such that $\forall t \geqslant 0$, the half plane capacity
of $\gamma[0, t]$ is
$2t$, i.e. $\lim _{z \rightarrow \infty} z\left(g_{t}(z)-z\right)=2 t$, where $g_{t}: \mathbb{H} \backslash \gamma[0, t] \rightarrow \mathbb{H}$ is the unique conformal map such that $g_{t}(z)-z \rightarrow 0 $, as $|z| \rightarrow \infty$. We will assume that $\gamma$ is defined for $t \in[0,1]$ in this parametrization. The Loewner transform of $\gamma$ is the real-valued continuous function $U$ defined by $U_{t}:=\lim _{z \rightarrow \gamma_{t} ; z \in \mathbb{H}\setminus\gamma[0,t]} g_{t}(z)$. For each $z \in \mathbb{H}$, $g_t(z)$ satisfies the ordinary differential equation (ODE) given by 

\begin{equation}\label{LDE}
\partial_tg_t(z)=\frac{2}{g_t(z)-U_t}, \hspace{2mm} g_0(z)=z.
\end{equation}
We refer to the equation \eqref{LDE} as the Loewner differential equation (LDE). \\
This process could be reversed. Given a driving function $U_t$ one can solve LDE \eqref{LDE}. The resulting map $g_t$ is a conformal map from the set of points $\mathbb{H}\setminus K_t$ where the solution exists up to time $t$ onto the upper half-plane. It is a standard fact that for  $U=\sqrt{\kappa} W$ where $\kappa\in [0,4]$ and $W$ is a standard Brownian motion, there is a continuous simple curve $\gamma=\gamma^{\kappa}$ such that $K_t=\gamma[0,t]$.  The curves  $\gamma^{\kappa}$ are known as SLE$_{\kappa}$ (curves). From now on we assume that the driving function $U_t$ is of this form. \\

To recover the curve $\gamma$ (when it exists) from $U$, it is beneficial to look at the flow associated to reverse-time LDE as follows. Let $\hat{U}_{t}=U_{1}-U_{1-t}$ be the time reversal of $U$. For each fixed $s \in [0,1]$, $t \geq s$ and $z \in \mathbb{H}$, let $h(s,t,z)$ denote the solution of the reverse time stochastic LDE given by 
\begin{equation}
\label{reverse SDE}
d h(s, t, z)= d\hat{U}_{t}-\frac{2}{h(s,t,z)} dt, \quad h(s, s, z)=z \in \mathbb{H}.
\end{equation}

The map $h:\{0 \leqslant s \leqslant t \leqslant 1\} \times \mathbb{H} \rightarrow \mathbb{H}$ is called
the flow associated with the equation \eqref{reverse SDE} and
it satisfies the so called flow property:
\[
 h(s, t, z)=h(u,t, h(s, u,z)),\qquad \forall \ s \leqslant u \leqslant t.
\] 

We will need the following Lemma from \cite{rohde2018loewner}.
\begin{lemma}\label{up-bound}
If $z=iy$, $y > 0$, then 
\[
|\Re h(s, t, z))|\leqslant 2 \sup_{r \in[s, t]}| \hat{U}_{r}-\hat{U}_{s}|,
\]
and
\[
\operatorname{Im}(h(s,t,z)) \leq \sqrt{y^2+4(t-s)}.
\]
\end{lemma}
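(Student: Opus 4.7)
The plan is to write $h(s,t,z) = X_r + i Y_r$ for $r \in [s,t]$ and decompose the reverse-time equation \eqref{reverse SDE} into the coupled pair
\[
dX_r = d\hat U_r - \frac{2 X_r}{X_r^2 + Y_r^2}\, dr, \qquad dY_r = \frac{2 Y_r}{X_r^2 + Y_r^2}\, dr,
\]
with $X_s = 0$ and $Y_s = y$. The point is that the $Y$-equation has no Brownian increment at all, and the $X$-equation becomes noise-free once we subtract $\hat U_r - \hat U_s$. So both estimates reduce to pathwise ODE/barrier arguments on each Brownian trajectory.

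For the imaginary bound I would simply note $X_r^2 + Y_r^2 \ge Y_r^2$, whence $\frac{dY_r}{dr} \le \frac{2}{Y_r}$. Multiplying by $2 Y_r$ and integrating from $s$ to $t$ gives $Y_t^2 - y^2 \le 4(t-s)$, which is exactly the desired bound on $\operatorname{Im} h(s,t,z)$.

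For the real bound, set $M := \sup_{r \in [s,t]} |\hat U_r - \hat U_s|$ and introduce the noise-subtracted quantity $\tilde X_r := X_r - (\hat U_r - \hat U_s)$. Then $\tilde X$ satisfies the pathwise ODE $\tilde X'(r) = -\frac{2 X_r}{X_r^2 + Y_r^2}$ with $\tilde X_s = 0$, so in particular $r \mapsto \tilde X_r$ is continuously differentiable. The main claim I would prove is $|\tilde X_r| \le M$ on $[s,t]$; the triangle inequality then yields $|X_r| \le |\tilde X_r| + |\hat U_r - \hat U_s| \le 2M$, completing the proof.

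To establish $|\tilde X_r| \le M$, I would run a last-exit argument. If the bound fails, suppose (by symmetry) that $\tilde X_{r_0} > M$ for some $r_0$, and let $\tau$ be the last time in $[s, r_0]$ with $\tilde X_\tau = M$. On $[\tau, r_0]$ we have $\tilde X_r \ge M$ and hence $X_r = \tilde X_r + (\hat U_r - \hat U_s) \ge M - M = 0$, so $\tilde X'(r) = -2 X_r/(X_r^2 + Y_r^2) \le 0$ throughout $[\tau, r_0]$, contradicting $\tilde X_{r_0} > \tilde X_\tau$. The analogous argument rules out $\tilde X_{r_0} < -M$. I expect no substantial obstacle here; the only subtlety is justifying the reduction to a pathwise ODE (legitimate because only the drift involves $h$, while $\hat U$ enters additively), after which the barrier/sign-tracking is elementary.
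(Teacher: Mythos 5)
Your argument is correct. The paper itself offers no proof of this lemma---it is imported directly from Rohde--Tran--Zinsmeister---so there is nothing internal to compare against; but your pathwise decomposition $h = X + iY$, the observation that $Y$ solves a noiseless ODE with $\dot Y \le 2/Y$, and the noise-subtraction $\tilde X_r = X_r - (\hat U_r - \hat U_s)$ followed by the last-exit/barrier argument is precisely the standard route to this estimate and is sound: $\tilde X$ is $C^1$ because $Y_r \ge y > 0$ keeps the drift continuous, and on any interval where $\tilde X \ge M$ one has $X_r \ge 0$ hence $\tilde X' \le 0$, which forbids upcrossings of level $M$ (symmetrically for $-M$), giving $|X_t| \le |\tilde X_t| + |\hat U_t - \hat U_s| \le 2M$.
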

\vspace{5mm}
The following lemma is a rewriting of Lemma $2.1$ from \cite{shekhar2017remarks} and it follows easily from \eqref{LDE}.
\begin{lemma}\label{unif-maps}
If $f_{t}:\mathbb{H}\rightarrow \mathbb{H}\setminus \gamma [0,t]$ is the conformal map such that $\lim _{z \rightarrow 0} f_t(z)=\gamma_{t}$ and $f_{t}(z)=z+O(1) \text { as } |z| \rightarrow \infty$, then $f_{t}(z)=h(1-t, 1, z).$ In particular
$f_{1}(z)=h(0,1, z)$
\end{lemma}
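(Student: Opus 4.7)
The plan is to exhibit both $f_t(z)$ and $h(1-t,1,z)$ as values at the same endpoint of two solutions of a common integral equation, and then conclude by pathwise uniqueness.

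First, I would identify $f_t$ in terms of the forward uniformizing map. Since the hydrodynamically normalized map $g_t:\mathbb{H}\setminus\gamma[0,t]\to\mathbb{H}$ extends continuously to the boundary of the slit domain and sends the tip $\gamma_t$ to $U_t$, the composition $z\mapsto g_t^{-1}(z+U_t)$ is a conformal map from $\mathbb{H}$ onto $\mathbb{H}\setminus\gamma[0,t]$ sending $0$ to $\gamma_t$ and satisfying $g_t^{-1}(z+U_t)=z+U_t+O(1/z)$ at infinity. A standard Schwarz-type uniqueness argument -- any conformal automorphism of $\mathbb{H}$ fixing $0$ and asymptotic to the identity at infinity must be the identity -- then gives $f_t(z)=g_t^{-1}(z+U_t)$.

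Second, I would compare integral equations for the two candidates. Fix $t\in[0,1]$ and $z\in\mathbb{H}$, set $w:=g_t^{-1}(z+U_t)=f_t(z)$, and introduce
\begin{equation*}
G(r):=g_r(w)-U_r, \qquad F(r):=h(1-t,\,1-r,\,z), \qquad r\in[0,t].
\end{equation*}
Integrating \eqref{LDE} from $r$ to $t$ and using $g_t(w)-U_t=z$ yields
\begin{equation*}
G(r)\;=\;z+U_t-U_r-2\int_r^t\frac{dr'}{G(r')}.
\end{equation*}
The integral form of \eqref{reverse SDE}, combined with the change of variables $u=1-r'$ in the Lebesgue integral and the identity $\hat U_{1-r}-\hat U_{1-t}=U_t-U_r$, produces the same equation for $F$:
\begin{equation*}
F(r)\;=\;z+U_t-U_r-2\int_r^t\frac{dr'}{F(r')}.
\end{equation*}

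Third, I would invoke pathwise uniqueness. Both $F$ and $G$ keep their imaginary parts bounded below by $\operatorname{Im} z>0$ throughout $[0,t]$ -- for $G$ this is because $r\mapsto\operatorname{Im} g_r(w)$ is non-increasing under the forward flow, and for $F$ it is the monotonicity of $t'\mapsto\operatorname{Im} h(s,t',z)$ built into the reverse flow -- so both trajectories remain in a region where $\zeta\mapsto 1/\zeta$ is uniformly Lipschitz. A backward Gr\"onwall argument forces $F=G$ on $[0,t]$, and evaluating at $r=0$ gives $h(1-t,1,z)=F(0)=G(0)=w-U_0=g_t^{-1}(z+U_t)=f_t(z)$, since $U_0=0$. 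Specializing to $t=1$ produces $f_1(z)=h(0,1,z)$. The only mildly delicate point is that $U$ is H\"older rather than differentiable, but this plays no role because $U$ enters the integral equations only through its point values, so the argument is a genuinely sample-path deterministic ODE comparison.
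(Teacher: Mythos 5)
Your proof is correct, and it fleshes out in detail the argument the paper only alludes to (by citing Lemma 2.1 of \cite{shekhar2017remarks} and remarking that the result ``follows easily from \eqref{LDE}''): identify $f_t=g_t^{-1}(\cdot+U_t)$ via a Schwarz-lemma normalization argument, reparametrize the forward and reverse flows onto a common interval $[0,t]$, observe they satisfy the same integral equation with terminal value $z$ at $r=t$, and conclude by a Gr\"onwall comparison using the lower bound $\operatorname{Im}\geq \operatorname{Im} z$ to get a Lipschitz vector field. This is exactly the standard route, so the proposal matches the paper's intended proof.
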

\vspace{4mm}

The welding homeomorphism of a simple $\gamma[0,1]$ defined in the Section \ref{intro} can thus be constructed using the continuous extension of $h(0,1,\cdot)$ to $\overline{\mathbb{H}}$. It is therefore natural to consider solution $h(s,t,x)$ of \eqref{reverse SDE} started from $x \in \mathbb{R}\setminus \{0\}$. Note however that in this case the solution might hit zero in finite time and we will consider $h(s,t,x)$ only up to this hitting time.

When $U=\sqrt{\kappa} W,$ we will denote the time reverse Brownian motion $\hat{W}$ by $B$ and write $h^{\kappa}(s, t, z)$ for the flow obtained when $\hat{U}=\sqrt{\kappa}B$. Note that if $Z^\delta(s,t,x)$ denote the solution to \eqref{Bessel-eqn} with the initial value $Z^\delta(s,s,x) =x$, then for $\kappa \neq 0$ and $\delta=1-\frac{4}{\kappa}$ ($\delta$ and $\kappa$ are henceforth always related as such),
\begin{equation}\label{loewner=bessel}
\frac{h^{\kappa}(s, t,\sqrt{\kappa}x)}{\sqrt{\kappa}} = Z^\delta(s,t,x).    
\end{equation}
It follows that $ T_{\kappa}(s, \sqrt{\kappa}x) = \zeta_{\delta}(s, x)$, where
\[
\begin{aligned}
T_\kappa(s,x)&:=\inf\left\{t>s | h^{k}\left(s, t, x\right)=0\right\}
\\
\zeta_\delta(s,x)&:=\inf\left\{t>s | Z^\delta\left(s, t, x\right)=0\right\}.
\end{aligned}
\] 
Also set $T_{\kappa}(s,0)= \zeta_{\delta}(s, 0)=s$. To simplify some notations we will use $T_\kappa(x)$ and $\zeta_\delta(x)$ to denote $T_\kappa(0,x)$ and $\zeta_\delta(0,x)$.

The following result in well known, see e.g. Proposition $2.9$ and Proposition $2.11$ of \cite{lawler2018notes}. Recall the Inverse-Gamma$(\alpha,\beta)$ distribution has a density proportional to $t^{-1-\alpha}\exp\{-\beta/t\}$.  

\begin{lemma}\label{exact-law}

\begin{enumerate}
\item For $\delta<2,$ the $\zeta_\delta(1)$ has the Inverse-Gamma$(1-\frac{\delta}{2},\frac{1}{2})$ law. 
\item If $\delta \leq \frac{3}{2}$, then for all $0<x<y<\infty$,
\[
\mathbb{P}[\zeta_\delta(x)< \zeta_\delta(y)]=1.
\]
    
\end{enumerate}

\end{lemma}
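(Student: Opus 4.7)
My plan is to handle the two parts of the lemma by different techniques: scaling plus an explicit density computation for (a), and pathwise coupling via the common driving Brownian motion for (b).

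For part (a), I would first invoke Brownian scaling: if $Z^\delta(\cdot, x)$ solves the Bessel SDE, then $x^{-1} Z^\delta(x^2 \cdot, x)$ is again a Bessel process of dimension $\delta$ starting at $1$, giving $\zeta_\delta(x) \stackrel{d}{=} x^2 \zeta_\delta(1)$. Thus it suffices to identify the law of $\zeta_\delta(1)$. Passing to the squared process $Y_t = Z_t^2$, which satisfies the BESQ$(\delta)$ equation $dY = 2\sqrt{Y}\, dB + \delta\, dt$, the sub-probability transition density on $(0, \infty)$ is explicitly known in terms of the modified Bessel function $I_{\delta/2 - 1}$. Differentiating the absorption probability $\mathbb{P}(\zeta_\delta(1) \leq t)$ in $t$ (or, equivalently, computing the Laplace transform $\mathbb{E}[e^{-\lambda \zeta_\delta(1)}]$ by applying It\^o's formula to a candidate eigenfunction of the BESQ generator of the form $(2t)^{-(1-\delta/2)} e^{-y/(2t)}$) produces the density
\[
f_{\zeta_\delta(1)}(t) \;=\; \frac{(1/2)^{1-\delta/2}}{\Gamma(1 - \delta/2)}\, t^{-2+\delta/2}\, e^{-1/(2t)},
\]
which is exactly the Inverse-Gamma$(1 - \delta/2, 1/2)$ density.

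For part (b), the key is that $Z^\delta(\cdot, x)$ and $Z^\delta(\cdot, y)$ are driven by the same Brownian motion, so the difference $D_t := Z^\delta(t, y) - Z^\delta(t, x)$ has no martingale part and satisfies a pathwise ODE
\[
\dot D_t \;=\; \frac{\delta - 1}{2}\Big(\frac{1}{Z^\delta(t, y)} - \frac{1}{Z^\delta(t, x)}\Big) \;=\; \frac{1 - \delta}{2}\cdot\frac{D_t}{Z^\delta(t, x)\, Z^\delta(t, y)},
\]
valid for $t < \zeta_\delta(x)$. When $\delta \leq 1$ the coefficient $(1-\delta)/2$ is nonnegative, so $D_t$ is nondecreasing and stays $\geq y - x > 0$ all the way up to $\zeta_\delta(x)$; this forces $Z^\delta(\zeta_\delta(x), y) > 0$ and therefore $\zeta_\delta(x) < \zeta_\delta(y)$ almost surely. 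In the remaining range $1 < \delta \leq 3/2$ the ODE integrates to
\[
D_t \;=\; (y - x)\exp\!\Big(-\tfrac{\delta - 1}{2}\int_0^t \frac{ds}{Z^\delta(s, x)\, Z^\delta(s, y)}\Big),
\]
so strict positivity at $\zeta_\delta(x)$ is equivalent to almost sure finiteness of that integral. My approach would be to pass to squared processes, where $D^Y_t := Y^y_t - Y^x_t$ is a nonnegative local martingale (the linear BESQ drift $\delta\,dt$ cancels), and a Dambis--Dubins--Schwarz time change of $\log D^Y$ turns the question into finiteness of the clock $\int_0^{\zeta_\delta(x)} (\sqrt{Y^x_s} + \sqrt{Y^y_s})^{-2}\, ds$. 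This clock can be analyzed via the time-reversal theorem for Bessel processes: near $\zeta_\delta(x)$, $Z^\delta(\cdot, x)$ runs like a BES$(4 - \delta)$ started at $0$, and the threshold $\delta = 3/2$ emerges from a sharp integrability estimate near the hitting time.

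The main obstacle is precisely this last step. The naive bound $(Z^x Z^y)^{-1} \leq (Z^x)^{-2}$ yields a logarithmically divergent integral at the hitting time, since $Z^\delta(s, x) \sim \sqrt{\zeta_\delta(x) - s}$ by time-reversal. To capture the exact threshold $\delta = 3/2$ one must either exploit the gap $D_s$ self-consistently to get a better lower bound on $Z^y_s$, or use the precise martingale structure of $D^Y$ to obtain the required $L^1$ control along the hitting time.
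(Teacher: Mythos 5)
The paper does not prove this lemma: both parts are stated as classical and cited from Propositions 2.9 and 2.11 of Lawler's notes on Bessel processes, so there is no in-paper argument to compare yours against; your proposal must stand on its own. Part (a) is correct --- the scaling $\zeta_\delta(x) \stackrel{d}{=} x^2\zeta_\delta(1)$, the BESQ$(\delta)$ reduction, and the resulting density $\frac{(1/2)^{1-\delta/2}}{\Gamma(1-\delta/2)}\,t^{-2+\delta/2}e^{-1/(2t)}$ all check out, and this is precisely the Inverse-Gamma$(1-\frac{\delta}{2},\frac{1}{2})$ law (equivalently $\zeta_\delta(1)\stackrel{d}{=}(2\gamma_{1-\delta/2})^{-1}$ for a standard Gamma variable). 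Part (b) is likewise fully correct for $\delta \le 1$: the coupled pathwise ODE gives $D_t\ge y-x$ on $[0,\zeta_\delta(x))$, hence $Z^\delta(\zeta_\delta(x),y)\ge y-x>0$. Since this paper only invokes the lemma for $\delta\le 0$ (i.e.\ $\kappa\in[0,4]$, $\delta=1-4/\kappa$), your argument already covers every use of the lemma in the text.

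The genuine gap is the range $1<\delta\le\frac{3}{2}$, and you acknowledge it yourself. Your reduction there is sound: $D^Y_t=Y^y_t-Y^x_t$ is a nonnegative local martingale, $\log D^Y$ is a local martingale minus half its bracket, and $D_{\zeta_\delta(x)}>0$ is equivalent to finiteness of the clock $\int_0^{\zeta_\delta(x)}\bigl(\sqrt{Y^x_s}+\sqrt{Y^y_s}\bigr)^{-2}\,ds$. But the naive bound $\bigl(\sqrt{Y^x}+\sqrt{Y^y}\bigr)^{-2}\le(4Y^x)^{-1}$ fails by exactly a logarithm, since $Z^\delta(s,x)\asymp\sqrt{\zeta_\delta(x)-s}$ near the hitting time, and you do not supply the estimate that beats it. That estimate is the entire content of the $3/2$ threshold: one must exploit the joint behaviour of $Z^x$ and $Z^y$ near $\zeta_\delta(x)$ --- in particular, that $Z^y$ cannot be forced down to the same scale as $Z^x$ unless the clock is already infinite --- rather than the marginal decay of $Z^x$ alone. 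As written, the proposal does not prove part (b) on $(1,\frac{3}{2}]$; you should either restrict the claim to $\delta\le 1$ (which suffices for this paper) or supply the missing clock estimate / cite Lawler for the full range.
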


\section{Some Technical Lemmas}\label{technical}
In this section we prove some technical lemmas we will need to prove  our main results.

\begin{lemma}[Gronwall inequality]\label{gronwall}
Let $x>0$ and $M_t$ and $N_t$ satisfy $$M_{t} \leqslant x+ \hat{U}_{t}-\int_{0}^{t} \frac{2}{M_{r}} dr \hspace{2mm} (\mbox{respectively} \geq)$$ 
and $$N_{t}=x+\hat{U}_{t}-\int_{0}^{t} \frac{2}{N_{r}} d r.$$ 
Then $M_t \leq N_t$ (resp. $M_t \geq N_t$). In particular, for $z=x+iy$, $x,y >0$, 
\begin{equation}\label{z>x}
\Re(h^\kappa(0,t,z)) \geq h^\kappa(0,t,x) \hspace{2mm}\mbox{for all} \hspace{2mm} t\leq T_\kappa(x).
\end{equation}
\end{lemma}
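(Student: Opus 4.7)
The plan is to prove the comparison by a standard Gronwall argument applied to the difference $D_t := M_t - N_t$, and then deduce \eqref{z>x} by taking real parts in the reverse Loewner equation \eqref{reverse SDE}. I will focus on the ``$\leq$'' case of the first assertion; the ``$\geq$'' case is strictly symmetric.

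First I would subtract the two relations. On any subinterval on which both $M$ and $N$ are strictly positive (so that the integrand makes sense) this gives
\begin{equation*}
D_t \;\leq\; \int_0^t \left(\frac{2}{N_r} - \frac{2}{M_r}\right) dr \;=\; \int_0^t \frac{2\, D_r}{M_r N_r}\, dr.
\end{equation*}
On a subinterval where additionally $M_r, N_r \geq \epsilon > 0$, this yields $D_t^+ \leq (2/\epsilon^2)\int_0^t D_r^+\, dr$, and Gronwall's inequality forces $D_t^+ \equiv 0$, i.e.\ $M_t \leq N_t$. Given any $t$ strictly before the first hitting time of zero by either process, continuity of $M$ and $N$ makes $\min_{r \in [0,t]}(M_r \wedge N_r)$ strictly positive, so such an $\epsilon$ exists. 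Hence the comparison holds up to the first hitting time of zero.

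To deduce \eqref{z>x}, I would write $h_t := h^\kappa(0,t,z)$ with $z = x+iy$, $x,y>0$, and take real parts in the integrated form of \eqref{reverse SDE}:
\begin{equation*}
\Re h_t \;=\; x + \hat{U}_t - \int_0^t \frac{2\, \Re h_r}{|h_r|^2}\, dr.
\end{equation*}
Since $|h_r|^2 \geq (\Re h_r)^2$, the integrand is dominated by $2/\Re h_r$ whenever $\Re h_r > 0$, and therefore
\begin{equation*}
\Re h_t \;\geq\; x + \hat{U}_t - \int_0^t \frac{2}{\Re h_r}\, dr.
\end{equation*}
This matches the ``$\geq$'' hypothesis of the first part with $N_t = h^\kappa(0,t,x)$, so the comparison yields $\Re h_t \geq h^\kappa(0,t,x)$ on $[0, T_\kappa(x)]$.

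The main subtlety is the blow-up of $2/M_r$ near zeros of $M$, which is what forces the Gronwall step to be localized to intervals where both processes are bounded below by a positive constant. A minor additional step in the application is a short bootstrap to rule out $\Re h_t$ reaching $0$ strictly before $T_\kappa(x)$: if $\Re h_\sigma = 0$ for some $\sigma \in (0, T_\kappa(x))$, then on $[0,\sigma)$ both processes stay positive and the comparison gives $\Re h_t \geq h^\kappa(0,t,x)$, contradicting $h^\kappa(0,\sigma,x) > 0$ by continuity at $\sigma$.
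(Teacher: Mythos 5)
Your proof is correct and follows essentially the same route as the paper: subtract the two relations to get $M_t - N_t \leq \int_0^t \frac{2(M_r-N_r)}{M_r N_r}\,dr$ and conclude by Gronwall, then for \eqref{z>x} take real parts in \eqref{reverse SDE} and use $|h_r|^2 \geq (\Re h_r)^2$. The paper is terser and leaves the localization implicit; you add the careful point about restricting to intervals where $M,N$ are bounded below, and the final bootstrap ruling out $\Re h_t$ vanishing before $T_\kappa(x)$, both of which are genuinely needed for a rigorous argument but are routine.
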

\begin{proof}
Note that $$M_{t}-N_{t} \leqslant \int_{0}^{t} \frac{2\left(M_{r}-N_{r}\right)}{M_{r} N_{r}} d r,$$ 
and the claim follows from Gronwall inequality. In particular, if $h^{\kappa}(0,t,z)= X_t+ iY_t$, then 
\[
d X_{t}=\sqrt{\kappa} dB_{t}-\frac{2 X_{t}}{X_{t}^{2}+Y_{t}^{2}} d t \geq \sqrt{k} d B_{t}-\frac{2}{X_{t}} d t,
\]
which implies \eqref{z>x}.
  
\end{proof}

\begin{lemma}\label{12}
As $\kappa \to 0+$, $\sqrt{\kappa}\sup_{t \leq T_{\kappa}(x)}|B_t|\to 0$ uniformly over $x$ in compact sets. 
\end{lemma}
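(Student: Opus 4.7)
The natural obstacle is a circular dependence: the quantity $\sup_{t\le T_\kappa(x)}|B_t|$ is controlled once $T_\kappa(x)$ is bounded, but $T_\kappa(x)$ itself is defined through the Brownian-driven flow. My strategy is to use the Gronwall comparison of Lemma \ref{gronwall} to derive a purely pathwise upper bound on $T_\kappa(R)$ in terms of the supremum of $\sqrt{\kappa}B$ on a fixed deterministic window, thereby breaking the circularity.

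\textbf{Step 1 (reduction to endpoints).} Since $\delta = 1-4/\kappa<1\le 3/2$ for every $\kappa>0$, Lemma \ref{exact-law}(2) combined with the identity $T_\kappa(x)=\zeta_\delta(x/\sqrt{\kappa})$ (which follows from \eqref{loewner=bessel}) implies that $x\mapsto T_\kappa(x)$ is almost surely strictly increasing on $(0,\infty)$. Replacing $B$ by $-B$, the analogous monotonicity holds on $(-\infty,0)$. Hence for any compact set $K\subset[-R,R]$ I get, almost surely,
\[
\sup_{x\in K}\sup_{t\le T_\kappa(x)}|B_t| \;\le\; \sup_{t\le T_\kappa(R)\vee T_\kappa(-R)}|B_t|.
\]

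\textbf{Step 2 (a priori bound on the hitting time).} Fix $T^*>R^2/4$ and pick $\epsilon\in\bigl(0,\,2\sqrt{T^*}-R\bigr)$. On the event $\Omega_{\kappa,\epsilon}:=\{\sqrt{\kappa}\sup_{0\le t\le T^*}|B_t|\le\epsilon\}$ the reverse Loewner equation \eqref{reverse SDE} gives
\[
h^\kappa(0,t,R) \;\le\; (R+\epsilon) - \int_0^t \frac{2}{h^\kappa(0,r,R)}\,dr,
\]
for every $t\le T^*\wedge T_\kappa(R)$. Lemma \ref{gronwall} (with the comparison ODE $N_t = (R+\epsilon)-\int_0^t 2/N_r\,dr$, solved by $N_t=\sqrt{(R+\epsilon)^2-4t}$) then forces
\[
h^\kappa(0,t,R) \;\le\; \sqrt{(R+\epsilon)^2-4t},
\]
which must become $0$ by time $(R+\epsilon)^2/4\le T^*$. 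Thus $T_\kappa(R)\le T^*$ on $\Omega_{\kappa,\epsilon}$, and the same conclusion for $T_\kappa(-R)$ follows by replacing $B$ with $-B$ in the argument.

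\textbf{Step 3 (conclusion).} Almost surely, $B$ is continuous, hence bounded on $[0,T^*]$, so $\sqrt{\kappa}\sup_{t\le T^*}|B_t|\to 0$ as $\kappa\to 0+$. Therefore, for every $\epsilon\in(0,2\sqrt{T^*}-R)$ the event $\Omega_{\kappa,\epsilon}$ holds for all sufficiently small $\kappa$, and then combining Steps 1 and 2 gives
\[
\sqrt{\kappa}\sup_{x\in K}\sup_{t\le T_\kappa(x)}|B_t| \;\le\; \sqrt{\kappa}\sup_{t\le T^*}|B_t| \;\le\; \epsilon.
\]
Letting $\epsilon\downarrow 0$ yields the claimed almost sure uniform convergence over $x\in K$. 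The only subtle point is Step 2, where the Gronwall comparison is used to promote a deterministic smallness bound on the driver into an a priori cap on the random hitting time; once this is in place, the remainder reduces to path regularity of Brownian motion on compact intervals.
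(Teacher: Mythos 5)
Your proof is correct, and it takes a genuinely different and in fact more elementary route than the paper. The paper reduces to $x=1$, compares to dyadic $\kappa_n=2^{-n}$, and then combines Chebyshev, Burkholder--Davis--Gundy, moment bounds for the Inverse-Gamma law, and Borel--Cantelli to control $\sqrt{\kappa_n}\sup_{t\le \zeta_{\delta_n}(1/\sqrt{\kappa_{n+1}})}|B_t|$ almost surely. Your argument is purely pathwise: you use the Gronwall comparison of Lemma \ref{gronwall} against the explicit driverless solution $N_t=\sqrt{(R+\epsilon)^2-4t}$ to deduce that once $\sqrt{\kappa}\sup_{t\le T^*}|B_t|\le\epsilon$ one has the a priori cap $T_\kappa(R)\le (R+\epsilon)^2/4\le T^*$; this breaks the apparent circularity deterministically, and then the conclusion follows from mere continuity of the Brownian path on the fixed window $[0,T^*]$. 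What your approach buys: it avoids all moment and Borel--Cantelli machinery, gives the (stronger) rate $O(\sqrt{\kappa})$ with the constant $\sup_{t\le T^*}|B_t|$ depending only on the path, and makes transparent exactly why the convergence is uniform on compacts.

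One small inaccuracy worth fixing: in Step 1 you attribute the monotonicity $T_\kappa(x)\le T_\kappa(R)$ for $x\in[0,R]$ to Lemma \ref{exact-law}(2), but that lemma only asserts $\mathbb{P}[\zeta_\delta(x)<\zeta_\delta(y)]=1$ for each \emph{fixed} pair $x<y$, not simultaneously over all pairs. What you actually need (and all you need) is the deterministic, pathwise fact that solutions of the reverse ODE \eqref{reverse SDE} started from $0<x<y$ cannot cross, by uniqueness, hence $h^\kappa(0,t,x)\le h^\kappa(0,t,y)$ and therefore $T_\kappa(x)\le T_\kappa(y)$ for every realization of the driver. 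That is the same ``monotonicity'' invoked implicitly in the paper's reduction to $x=1$, and it makes your Step~1 unconditional rather than almost sure for each pair.
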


\begin{proof}Using monotonicity and scaling property of $T_\kappa(x)$ w.r.t. $x$, it suffices to consider $x=1$. Note that $T_{\kappa}(x)= \zeta_{\delta}(1 / \sqrt{\kappa})$. Let $\kappa_n=2^{-n}$ and $\kappa_{n+1}\leq \kappa\leq \kappa_n,$ then 
\[
\zeta_{\delta}\left(\frac{1}{\sqrt{\kappa}}\right) \leqslant \zeta_{\delta}\left(\frac{1}{\sqrt{\kappa_{n+1}}}\right) \leqslant \zeta_{\delta_{n}}\left( \frac{1}{\sqrt{\kappa_{n+1}}}\right).
\]
So,  
\[
\sqrt{\kappa} \sup_{t \leqslant T_{\kappa}(1)}|B_{t}|\leqslant \sqrt{\kappa_{n}} \sup_{t \leqslant \zeta_{\delta_{n}}\left( {1}/{\sqrt{\kappa_{n+1}}}\right)} |B_{t}|.
\]
Now, using Chebyshev inequality and Burkholder-Davis-Gundy inequality we obtain that
\[
\begin{aligned}
\mathbb{P}\left(\sqrt{\kappa_n}\sup_{t \leqslant \zeta_{\delta_{n}}( {1}/{\sqrt{\kappa_{n+1}}})} |B_{t}| \geq \kappa_n^{1/4} \right)
& 
\leq \frac{{\kappa_n}\mathbb{E}\left(\sup_{t \leqslant \zeta_{\delta_{n}}(1/\sqrt{\kappa_{n+1}})} |B_{t}|\right)^2}{\sqrt{\kappa_n}}
\\
&=O\left(\sqrt{\kappa_{n}} \mathbb{E}\left[ \zeta_{\delta_{n}}\left(\frac{1}{\sqrt{\kappa_{n+1}}}\right)\right]\right).
\end{aligned}
\]
Note that 
$\zeta_{\delta_{n}}(1) \sim \text{Inverse-Gamma}\left(\frac{2}{\kappa_{n}}+\frac{1}{2}, \frac{1}{2}\right)$, hence
\[
\mathbb{E}\left[\zeta_{\delta_{n}}(1)\right]=\frac{1}{2}\frac{1}{\frac{2}{\kappa_{n}}-\frac{1}{2}}=O\left(\kappa_{n}\right),
\]
which implies 
\[
\mathbb{P}\left(\sqrt{\kappa_n}\sup_{t \leqslant \zeta_{\delta_{n}}(  1/\sqrt{\kappa_{n+1}})} |B_{t}| \geq \kappa_n^{1/4} \right)=O(\sqrt{\kappa_n}).
\]
Borel-Cantelli Lemma implies that for $n$ large enough, 
\[
\sqrt{\kappa_n}\sup_{t \leqslant \zeta_{\delta_{n}}(  1/\sqrt{\kappa_{n+1}})}  |B_{t}| \leq \kappa_n^{1/4},
\] 
and the conclusion follows. 
\end{proof}

We will also use the following lemma on random walks with Inverse-Gamma$(1,\frac{1}{2})$ increments. Let $\{T_n\}_{n \geq 1}$ be an i.i.d. sequence of random variables each distributed as Inverse-Gamma$\left(1, 1/2 \right)$.
Note that $\mathbb{E}[T_1]=+\infty$ and the strong law of large numbers implies that almost surely $\frac{T_{1}+\ldots+T_{n}}{n} \longrightarrow+\infty$, i.e. $T_1+T_2+\dots+T_n$ tends to infinity faster than linear function. The following lemma gives that the precise speed of convergence is $n \log n$. The additional $\log n$ factor will be crucial for our proofs.   

\begin{lemma}\label{Lemma 5}
If $\{S_n\}_{n \geq 1}$ is a sequence of random variables such that $\forall n \geq 1$, $S_{n} \stackrel{d}{=} T_{1}+\cdots+T_{n}$, then 
\[
\frac{S_{n}}{n \log n} \stackrel{p}{\longrightarrow} \frac{1}{2}.
\]
\end{lemma}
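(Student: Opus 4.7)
The plan is to apply the standard truncation-and-Chebyshev approach for random variables with tails on the boundary of the infinite-mean regime. The first step is to compute the tail of $T_1$. Since the density is $f(t) = \frac{1}{2} t^{-2} e^{-1/(2t)}$ for $t>0$, the substitution $u = 1/(2s)$ gives
\[
\mathbb{P}[T_1 > t] = \int_t^\infty \frac{1}{2} s^{-2} e^{-1/(2s)} \, ds = 1 - e^{-1/(2t)} = \frac{1}{2t} + O(t^{-2}),
\]
so $T_1$ has a Cauchy-like tail $\mathbb{P}[T_1 > t] \sim 1/(2t)$. This places $T_1$ at the boundary of infinite-mean stable attraction, where the logarithmic correction $n \log n$ naturally appears.

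Since $S_n$ is defined only in distribution, it suffices to prove the claim for $S_n = T_1 + \cdots + T_n$ directly. I would truncate at level $c_n := n \log n$, writing $T_i^{(n)} := T_i \, \mathds{1}_{\{T_i \le c_n\}}$ and $\widetilde{S}_n := \sum_{i=1}^n T_i^{(n)}$. Step one is to check that truncation is lossless in probability: by the union bound and the tail estimate,
\[
\mathbb{P}[\widetilde{S}_n \neq S_n] \le n \, \mathbb{P}[T_1 > n \log n] = O\!\left(\frac{1}{\log n}\right) \to 0.
\]
Step two is to estimate the truncated mean. Using the density,
\[
\mathbb{E}[T_1^{(n)}] = \int_0^{c_n} \frac{1}{2t} e^{-1/(2t)} \, dt = \frac{1}{2} \log c_n + O(1) = \frac{1}{2} \log n + O(\log \log n),
\]
so $\mathbb{E}[\widetilde{S}_n] = \tfrac{1}{2} n \log n + O(n \log \log n) = \tfrac{1}{2} n \log n (1+o(1))$.

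Step three is the variance bound. Since $\mathbb{E}[(T_1^{(n)})^2] = \int_0^{c_n} \tfrac{1}{2} e^{-1/(2t)} \, dt \le c_n/2$, independence gives
\[
\Var(\widetilde{S}_n) \le n \cdot \frac{c_n}{2} = \frac{n^2 \log n}{2}.
\]
Chebyshev's inequality then yields, for any $\varepsilon > 0$,
\[
\mathbb{P}\bigl[\,|\widetilde{S}_n - \mathbb{E}[\widetilde{S}_n]| > \varepsilon \, n \log n \,\bigr] \le \frac{n^2 \log n / 2}{\varepsilon^2 n^2 \log^2 n} = O\!\left(\frac{1}{\log n}\right) \to 0,
\]
so $\widetilde{S}_n / (n \log n) \xrightarrow{p} 1/2$. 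Combining with $\mathbb{P}[\widetilde{S}_n \neq S_n] \to 0$ completes the proof.

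The only delicate point is the choice of truncation level: truncating at $n$ makes $\mathbb{P}[\widetilde{S}_n \neq S_n]$ stay bounded away from $0$ (since the expected number of exceedances is $\Theta(1)$), while truncating much higher than $n \log n$ would inflate the variance bound. The level $c_n = n \log n$ is the sweet spot where both the exceedance probability and the Chebyshev deviation decay like $1/\log n$, and where the truncated mean still captures the leading order $\tfrac{1}{2} \log n$. I expect this balance to be the only real content of the argument; everything else is a routine estimate on the Inverse-Gamma$(1,1/2)$ density.
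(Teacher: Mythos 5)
Your proof is correct, and it takes a genuinely different route from the paper's. The paper argues via Laplace transforms: it uses the identity $\mathbb{E}\bigl[e^{-tT_1}\bigr] = \sqrt{2t}\,K_1(\sqrt{2t})$, where $K_1$ is the modified Bessel function of the second kind, together with the small-argument asymptotic $\log\bigl(xK_1(x)\bigr) \sim \tfrac{1}{2}x^2\log x$ as $x\to 0^+$, to conclude that $\mathbb{E}\bigl[\exp(-tS_n/(n\log n))\bigr]\to e^{-t/2}$ and then invokes the L\'evy continuity theorem. Your truncation-and-Chebyshev argument is instead the classical Feller-type weak law for i.i.d.\ variables on the Cauchy tail boundary, specialized to this distribution; it needs only the elementary tail identity $\mathbb{P}[T_1>t]=1-e^{-1/(2t)}\sim 1/(2t)$, and all your estimates (exceedance probability $O(1/\log n)$, truncated mean $\tfrac12\log c_n+O(1)$, second moment $\le c_n/2$, Chebyshev bound $O(1/\log n)$) check out. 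The observation that it suffices to take $S_n=T_1+\cdots+T_n$ because convergence in probability to a constant depends only on marginals is also correct and worth stating explicitly, as you did. The trade-off between the two approaches: the paper's is more compact but relies on knowing (or looking up) the Inverse-Gamma Laplace transform and the $K_1$ expansion; yours is longer but entirely self-contained, avoids special functions, and would transfer verbatim to any i.i.d.\ sequence whose right tail is $\sim 1/(2t)$, not just the exact Inverse-Gamma form. One minor overstatement: $c_n=n\log n$ is not the unique ``sweet spot''---any $c_n$ with $n\ll c_n\ll n\log^2 n$ works, since then $n\,\mathbb{P}[T_1>c_n]\to 0$, $\log c_n\sim\log n$, and the Chebyshev bound $nc_n/(2\varepsilon^2 n^2\log^2 n)\to 0$---but your choice is a perfectly natural one within that window.
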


\begin{proof}
We show that the Laplace transforms 
\begin{equation}\label{(3)}
\mathbb{E}\left[\exp \left(\frac{-tS_n}{n \log n}\right)\right]\rightarrow e^{-t/2},
\end{equation}
 as $n \to \infty.$ Then, the L\'evy continuity Theorem implies the claim. To prove \eqref{(3)}, note that 
\begin{equation}
\label{laplace}
\begin{aligned}
\mathbb{E}\left[\exp \left(\frac{-t S_{n}}{n \log n}\right)\right]&=\left(\mathbb{E}\left[\exp\left(\frac{-t T_{1}}{n \log n}\right)\right]\right)^{n}\\
&=\left(\sqrt{\frac{2 t}{n \log n}} K_{1}\left(\sqrt{\frac{2 t}{n \log n}}\right)\right)^{n},
\end{aligned}
\end{equation}
where $K_1(x)$ is the modified Bessel function of the second kind. We have used the fact that $\mathbb{E}\left[e^{-t T_{1}}\right]=\sqrt{2 t} K_{1}(\sqrt{2 t}).$ Finally, note that 
$$\lim _{x \rightarrow 0^{+}} \frac{\log \left(x K_{1}(x)\right)}{x^{2}(\log x+1)}=\frac{1}{2},$$ 
and plugging this asymptotics in \eqref{laplace} gives \eqref{(3)}.

\end{proof}

\section{Construction of the field $\psi(\kappa,x)$.} \label{main-proofs}

In this section we give the construction of $\psi(\kappa,x)$. This will be based on the following Proposition. 

\begin{proposition}\label{Prop 1}
\begin{enumerate}

\item Almost surely for all $\kappa \in [0,4]$ simultaneously, the function $x \longmapsto T_{\kappa}(x)$ is a strictly increasing continuous bijection $[0, \infty) \rightarrow[0, \infty)$, and it is a strictly decreasing continuous bijection $( -\infty, 0] \rightarrow[0, \infty).$

\item Almost surely for all $\kappa \in [0,4]$ simultaneously, the function $\kappa \mapsto T_{\kappa}(\cdot) \in C([0, \infty),[0, \infty))$ or $C((-\infty, 0],[0, \infty)]$ is continuous.

\end{enumerate}
\end{proposition}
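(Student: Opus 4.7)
The plan is to pass to the Bessel formulation via the identity $T_\kappa(\sqrt{\kappa}x)=\zeta_\delta(x)$ with $\delta=1-4/\kappa\in(-\infty,0]$ for $\kappa\in[0,4]$, and establish all the required structural properties of the coupled field $(\delta,x)\mapsto\zeta_\delta(x)$ on a single almost-sure event. Strict monotonicity in $x$ at each fixed $\delta\le 0$ is given pointwise by Lemma~\ref{exact-law}(b); I would invoke it along a countable dense set of triples $(\delta,x_1,x_2)$ and then lift to all $(\delta,x)$ using the joint continuity proved below. For surjectivity onto $[0,\infty)$, I would combine the continuity with $T_\kappa(0)=0$ and with $T_\kappa(x)\to\infty$ as $x\to\infty$, the latter obtained from the Bessel scaling $\zeta_\delta(x)\stackrel{d}{=}x^2\zeta_\delta(1)$ together with a Borel--Cantelli argument along a dyadic subsequence.

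The main step is joint continuity. For the regularity in $x$ at fixed $\delta\le 0$ I would use two linked decompositions. The dyadic halving stopping times $\sigma_n^\delta(x):=\inf\{t:Z^\delta(t,x)=x\,2^{-n}\}$ give $\zeta_\delta(x)=\sum_{n\ge 1}(\sigma_n^\delta(x)-\sigma_{n-1}^\delta(x))$; strong Markov and Bessel scaling show that the $n$-th increment is stochastically dominated by $(x\,2^{-n+1})^2$ times an i.i.d.\ Inverse-Gamma$(1-\delta/2,1/2)$ copy of $\zeta_\delta(1)$. The geometric $4^{-n+1}$ factor yields almost-sure summability of the tail uniformly in $\delta$ on compact subsets of $(-\infty,0]$, which is what gives the uniform finiteness of $\zeta_\delta(x)$ needed below. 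For the modulus of continuity in $x$, I would instead use an equidistant partition $x_k=kR/N$ of $[0,R]$ and write $\zeta_\delta(R)=\sum_{k=1}^N(\zeta_\delta(x_k)-\zeta_\delta(x_{k-1}))$; by the strong Markov property each increment equals $\eta_k^2 V_k$ with $V_k$ i.i.d.\ Inverse-Gamma$(1-\delta/2,1/2)$, independent of $\eta_k:=Z^\delta(\zeta_\delta(x_{k-1}),x_k)$. Lemma~\ref{Lemma 5} is designed to handle precisely this infinite-mean random walk situation at $\delta=0$; its refined $n\log n$ growth, rather than the would-be linear-in-$n$ of a law of large numbers, is what provides enough slack to conclude that $\max_k(\zeta_\delta(x_k)-\zeta_\delta(x_{k-1}))\to 0$ as $N\to\infty$, uniformly in $\kappa\in[0,4]$ on compact ranges of $x$.

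Continuity in $\kappa$ splits naturally into two regimes. At $\kappa=0$ the equation reduces to the deterministic ODE $\dot h=-2/h$ with explicit solution giving $T_0(x)=x^2/4$; Lemma~\ref{12} supplies the vanishing of $\sqrt\kappa\sup_{t\le T_\kappa(x)}|B_t|$ on compacts, and Lemma~\ref{gronwall} then pushes pathwise convergence $h^\kappa\to h^0$ through to convergence of hitting times. For $\kappa$ bounded away from $0$, the coefficients of \eqref{reverse SDE} depend smoothly on $\sqrt\kappa$ and, since all processes are coupled through the same $B$, standard SDE stability gives pathwise continuity of $h^\kappa(0,\cdot,x)$ in $\kappa$ on $\{t<T_\kappa(x)\}$. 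Combined with the uniform tail control coming from the dyadic step, this upgrades to continuity of the hitting times in $\kappa$ at each fixed $x$. A three-$\epsilon$ argument marrying continuity in $x$ (uniformly in $\kappa$) with continuity in $\kappa$ at each fixed $x$ then yields the simultaneous joint continuity claimed in (2).

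The principal obstacle is the critical endpoint $\kappa=4$ (equivalently $\delta=0$), where $\zeta_0(1)$ has infinite expectation. Neither SLLN-type concentration nor a routine Kolmogorov--Chentsov chaining applies in this regime, and one must instead extract the precise $n\log n$ growth of Lemma~\ref{Lemma 5} from the Laplace transform of the Inverse-Gamma law. Ensuring that the dyadic and equidistant decompositions behave well \emph{simultaneously} for every $\kappa\in[0,4]$ while driven by a single common Brownian motion is where the bulk of the technical work in Section~\ref{prop-proof} will go.
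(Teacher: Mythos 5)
The broad outline (pass to the Bessel formulation, control the critical regime $\delta=0$ via Lemma~\ref{Lemma 5}, use Lemma~\ref{12} and Lemma~\ref{gronwall} at $\kappa=0$) is aligned with the paper, and you correctly identify that the infinite-mean Inverse-Gamma law at $\kappa=4$ is the crux. But the central step — continuity in $x$ — contains a genuine gap. You partition $[0,R]$ equidistantly and write
$\zeta_\delta(R)=\sum_{k=1}^N(\zeta_\delta(x_k)-\zeta_\delta(x_{k-1}))=\sum_{k=1}^N \eta_k^2 V_k$,
then invoke Lemma~\ref{Lemma 5} to conclude that $\max_k\eta_k^2V_k\to0$. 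Lemma~\ref{Lemma 5}, however, applies to $S_N$ distributed as a sum of $N$ i.i.d.\ Inverse-Gamma variables; the sum $\sum_k\eta_k^2V_k$ is not of this form, because the prefactors $\eta_k=Z^\delta(\zeta_\delta(x_{k-1}),x_k)$ are random, mutually dependent, and a priori uncontrolled (and $\zeta_\delta(R)$ itself is a fixed $R^2\cdot$Inverse-Gamma variable, not a sum of $N$ equally scaled independent copies). The statement that the max increment vanishes is essentially equivalent to the continuity you are trying to prove, so the argument as written is circular.

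The paper sidesteps this by constructing, for each $n$, a genuinely i.i.d.\ walk in the \emph{time} variable at a fixed spatial scale: $s^n_0=0$, $s^n_{k+1}=T_4(s^n_k,2x_n)$. Strong Markov plus Bessel scaling give increments exactly $x_n^2\cdot$Inverse-Gamma$(1,1/2)$ i.i.d., so Lemma~\ref{Lemma 5} applies directly and shows $s^n_{k_n}>1$ along a subsequence, while the explicit cdf $\exp(-1/(2\lambda))$ controls the maximal increment via Borel--Cantelli. This proves $\lim_{x\to0+}T_\kappa(s,x)-s=0$ simultaneously for all $\kappa\in[0,4]$ and all $s$, which is the key; right-continuity at a general $x$ then follows from the flow property $T_\kappa(y)-T_\kappa(x)=T_\kappa(T_\kappa(x),h^\kappa(0,T_\kappa(x),y))-T_\kappa(x)$, and left-continuity from a monotone-limit-plus-uniqueness argument for \eqref{reverse SDE}. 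If you want to rescue your approach, you need to replace the equidistant spatial partition with this time-iterated walk at fixed position. A secondary remark: for $\kappa$ bounded away from $0$, appealing to ``standard SDE stability'' is risky because the drift $-2/h$ is singular precisely at the hitting time you are tracking; the paper instead exploits monotonicity of the Bessel flow in $\delta$ together with Dini's theorem, which avoids any analysis near the singularity.
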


The proof of this proposition is postponed until the next Section.

Proposition \ref{Prop 1} has a simple corollary. To state it we will need the following notations.  
We assume that  $\kappa \in [0,4]$ and $t \in[0,1]$ . 
For $x \geqslant 0$ we define
\[
\tilde{h}_{t}^{\kappa, +}(x)=\left\{\begin{array}{l}
h^{\kappa}(0,t,x), \quad \text { if } t \leq T_{\kappa}(x) \\
T_{\kappa}(x)-t, \text { if } t \geqslant T_{\kappa}(x)
\end{array}\right.
\]
Similarly, for  $x \leq 0$ we define
\[
\tilde{h}_{t}^{\kappa,-}(x)=\left\{\begin{array}{c} h^{\kappa}(0,t,x), \text { if } t \leq T_{\kappa}(x) \\ t-T_{\kappa}(x), \text { if } t \geqslant T_{\kappa}(x)\end{array}\right.
\]
The definition of the $\tilde{h}_{t}^{\kappa, \pm}$ is a bit artificial for $t>T_{\kappa}(x)$, but it will help us represent the welding homeomorphisms in a neat way. 
An immediate corollary to Proposition \ref{Prop 1} is the following.
\begin{corollary}
\begin{enumerate}

\item Almost surely for all $\kappa \in [0,4]$ simultaneously, maps $x \longmapsto \tilde{h}_{1}^{\kappa, +}(x)$ and $x \longmapsto \tilde{h}_{1}^{\kappa, -}(x)$ are strictly increasing continuous bijections $[0, \infty) \rightarrow[-1, \infty)$ and $(-\infty, 0] \rightarrow(-\infty, 1]$ respectively. 
\item Furthermore, the functions
$$\kappa \mapsto \tilde{h}_{1}^{\kappa,+} \in C([0,\infty),[-1, \infty)), $$
$$\kappa \mapsto \tilde{h}_{1}^{\kappa,-} \in C((-\infty, 0],(-\infty, 1])$$
are continuous.  

\end{enumerate}
\end{corollary}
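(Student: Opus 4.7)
The plan is to exploit the piecewise definition of $\tilde{h}_1^{\kappa,\pm}$, reducing each clause to an application of Proposition \ref{Prop 1} combined with standard flow-continuity facts for the reverse Loewner SDE. It suffices to treat $\tilde{h}_1^{\kappa,+}$, since the argument for $\tilde{h}_1^{\kappa,-}$ is entirely symmetric. Throughout I would work on the almost sure event given by Proposition \ref{Prop 1}, on which $x \mapsto T_\kappa(x)$ is a strictly increasing continuous bijection of $[0,\infty)$ for every $\kappa \in [0,4]$ and $\kappa \mapsto T_\kappa$ is continuous into $C([0,\infty),[0,\infty))$.

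For part (1), define $x^\star_\kappa := T_\kappa^{-1}(1)$, and split $[0,\infty)$ into the killing interval $[0, x^\star_\kappa]$ and the survival interval $[x^\star_\kappa, \infty)$. On the killing side $\tilde{h}_1^{\kappa,+}(x) = T_\kappa(x) - 1$, which by Proposition \ref{Prop 1}(a) is a strictly increasing continuous bijection onto $[-1, 0]$. On the survival side $\tilde{h}_1^{\kappa,+}(x) = h^\kappa(0,1,x)$: monotonicity in $x$ comes from uniqueness of real trajectories of the reverse SDE (two such solutions cannot cross without coinciding), continuity from standard flow regularity, and $h^\kappa(0,1,x) \to \infty$ as $x \to \infty$ from a crude lower bound such as the one in Lemma \ref{gronwall}. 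The two pieces agree at the transition point $x^\star_\kappa$ where $h^\kappa(0,1,x^\star_\kappa) = 0 = T_\kappa(x^\star_\kappa) - 1$, producing the desired bijection $[0,\infty) \to [-1,\infty)$.

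For part (2), fix $\kappa_0 \in [0,4]$ and a sequence $\kappa_n \to \kappa_0$. By Proposition \ref{Prop 1}(b), $T_{\kappa_n} \to T_{\kappa_0}$ uniformly on compacts, and in particular $x^\star_{\kappa_n} \to x^\star_{\kappa_0}$. On any compact subset of $[0, x^\star_{\kappa_0})$ uniform convergence of $\tilde{h}_1^{\kappa_n,+}$ is immediate from $\tilde{h}_1^{\kappa,+} = T_\kappa - 1$ and Proposition \ref{Prop 1}(b). On any compact subset of $(x^\star_{\kappa_0}, \infty)$, the flow $h^{\kappa_n}(0,t,x)$ is bounded away from $0$ on $[0,1]$ uniformly for all large $n$, so a Gronwall-type comparison between the SDEs driven by $\sqrt{\kappa_n}B$ and $\sqrt{\kappa_0}B$ (using that $|\sqrt{\kappa_n}-\sqrt{\kappa_0}|\sup_{t\le 1}|B_t| \to 0$) yields $h^{\kappa_n}(0,1,\cdot) \to h^{\kappa_0}(0,1,\cdot)$ uniformly.

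The main obstacle is the transition point $x = x^\star_{\kappa_0}$, where the drift $-2/h$ blows up so direct flow comparison breaks down. I would handle this by leveraging monotonicity: each $\tilde{h}_1^{\kappa_n,+}$ is monotone in $x$ and the limit $\tilde{h}_1^{\kappa_0,+}$ is continuous by part (1), so pointwise convergence on a countable dense set (established as above, avoiding the exceptional point $x^\star_{\kappa_0}$) upgrades automatically to uniform convergence on compacts by the standard Dini argument for monotone functions. This sidesteps the need to analyze the flow directly at its killing time and completes the continuity of $\kappa \mapsto \tilde{h}_1^{\kappa,+}$ in the $C$-topology.
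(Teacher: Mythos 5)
Your argument is correct, but it is considerably more elaborate than what the paper intends: the authors state the corollary with no proof, calling it ``an immediate corollary'' of Proposition~\ref{Prop 1}, so there is no written proof in the paper to compare against. What the authors presumably have in mind is the flow-property identity $T_\kappa(x)=T_\kappa\bigl(1,h^\kappa(0,1,x)\bigr)$ valid on the survival set $\{T_\kappa(x)\ge 1\}$, which lets one write $\tilde{h}_1^{\kappa,+}=G_\kappa\circ T_\kappa$ with $G_\kappa(s)=s-1$ for $s\le 1$ and $G_\kappa(s)=\bigl(T_\kappa(1,\cdot)\bigr)^{-1}(s)$ for $s\ge 1$; both factors are strictly increasing continuous bijections with the right ranges, jointly continuous in $\kappa$, directly by Proposition~\ref{Prop 1} applied to the hitting-time process (the version started at time $1$ follows by the same proof applied to the shifted Brownian motion), so all four claims drop out as a composition of monotone continuous maps. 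Your route instead splits at $x^\star_\kappa=T_\kappa^{-1}(1)$, checks each piece by hand, and patches the transition point with a Dini/P\'olya argument for monotone functions. That is a perfectly valid alternative, and the P\'olya step is a clean way to avoid a direct analysis of the flow at its killing time. Two of your intermediate claims are stated without justification and do need a word: (i) that $h^{\kappa_n}(0,t,x)$ is bounded away from $0$ on $[0,1]$ uniformly in large $n$ for $x$ in a compact of $(x^\star_{\kappa_0},\infty)$ --- this follows from Lemma~\ref{gronwall} after absorbing $(\sqrt{\kappa_n}-\sqrt{\kappa_0})B_t$ into the initial condition, giving $h^{\kappa_n}(0,t,x)\ge h^{\kappa_0}(0,t,x-\eta)$ for $n$ large, which is bounded below once $x-\eta>x^\star_{\kappa_0}$; and (ii) that $h^\kappa(0,1,x)\to\infty$ as $x\to\infty$, which is quickest to see from the elementary bound $h^\kappa(0,1,x)-h^\kappa(0,1,y)\ge x-y$ for $x>y\ge x^\star_\kappa$, coming from the nonnegativity of the integrand in the difference equation. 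With those filled in, your proof is complete; it buys nothing over the composition argument but is self-contained and does not require reasoning about $T_\kappa(1,\cdot)$.
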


We now define the continuous field $\psi$. Set $\psi(\kappa,0) =0$. For $x \in (0, \infty)$, let $\psi^{\kappa}(x)$ be the unique point such that 

$$\tilde{h}_{1}^{\kappa,-}\left(-\psi^{\kappa}(x)\right)=-\tilde{h}_{1}^{\kappa,+}(x).$$
Note that this definition is designed so that $h^{\kappa}(0,t, \cdot)$ started at $x$ and $-\psi^{\kappa}(x)$ either hit zero at the same time or $h^{\kappa}(0,1, x)=-h^{k}\left(0,1,-\psi^{\kappa}(x)\right)$. This is consistent with the definition of $\phi$ given in the Section \ref{intro}.
The proof of Theorem \ref{main-result-1} and Theorem \ref{main-result-2}-$(a),(b)$ is immediate from Proposition \ref{Prop 1}.

\begin{proof}[Proof of Theorem \ref{main-result-2}-(c)]
To prove $\phi^{\kappa}=\psi(\kappa,\cdot)$, using Lemma \ref{unif-maps}, it suffices to verify that 
\begin{equation}\label{phi=psi}\lim _{z \rightarrow x} h^{\kappa}(0,1, z)^{2}=\lim _{z \rightarrow-\psi^{\kappa}(x)} h^{\kappa}(0,1, z)^{2}.
\end{equation}
If $T_\kappa(x) >1$, then by definition, $T_\kappa(-\psi(\kappa,x))>1$. This implies that 
\[ h^\kappa(0,1,z)^2 \to h^\kappa(0,1,x)^2 \hspace{2mm} \mbox{as}\hspace{2mm} z \to x,\]
and
\[h^\kappa(0,1,z)^2 \to h^\kappa(0,1,-\psi(\kappa,x))^2 \hspace{2mm} \mbox{as}\hspace{2mm} z\to \psi(\kappa,x).\]
Then, the \eqref{phi=psi} follows by the definition of $\psi(\kappa,x)$.

If $T_\kappa(x) \leq 1$, then $T_\kappa(-\psi(\kappa,x))= T_\kappa(x) \leq 1$. Let $T_\kappa(-\psi(\kappa,x))= T_\kappa(x) = 1-t_0$, $t_0\geq 0$. Using the flow property, $h^\kappa(0,1,z) = h^\kappa(1-t_0,1,h^\kappa(0,1-t_0,z))$. We claim that 
\begin{equation}\label{goes-to-zero}
    h^\kappa(0,1-t_0,z) \to 0 \hspace{2mm}\mbox{as} \hspace{2mm} z \to x.
\end{equation} 
Then, using Lemma \ref{unif-maps}, it follows that $h^\kappa(0,1,z) \to \gamma_{t_0}^\kappa$ as $z\to x$. Similarly, $h^\kappa(0,1,z) \to \gamma_{t_0}^\kappa$ as $z\to -\psi(\kappa, x)$ as well, establishing \eqref{phi=psi}. 

To prove \eqref{goes-to-zero}, note that as $z\to x$, $\Re(z)$ is arbitrarily close to $x$. Then, using Lemma \ref{gronwall} and the continuity of $T_\kappa(x)$ in $x$, it follows that $\Re(h^\kappa(0,t,z)) > 0$ for all $t \leq T_\kappa(x)-\epsilon(z)$, where $\epsilon(z) \to 0$ as $z\to x$. Then it easily follows that $h^\kappa(0,T_\kappa(x)-\epsilon(z),z))$ is arbitrarily small. Finally, the \eqref{goes-to-zero} follows from the Lemma \ref{up-bound}. 

\end{proof}

\section{Proof of Proposition \ref{Prop 1}.} \label{prop-proof}

\begin{proof}[Proof of Proposition \ref{Prop 1}-(a)]

We first claim that almost surely simultaneously for all $\kappa \in [0,4]$ and all $s\in [0,1]$ (or equivalently for all $s \geq 0$ by a scaling argument), 
\begin{equation}\label{4}
\lim _{x \rightarrow 0} T_{\kappa}(s, x)-s=0.
\end{equation}
When $\kappa=0$, it follows from an explicit computation that 
$h^{0}(s, t, x)=\sqrt{x^{2}-4(t-s)},$ which implies $T_0(s,x)-s=\frac{x^2}{4}$ and \eqref{4} easily follows. For $\kappa \in (0,4],$  it suffices to consider $x \to 0+$. Using \eqref{loewner=bessel} and monotonicity of Bessel processes w.r.t. its dimension, it follows that if $0 <\kappa_1<\kappa_2 \leq 4$, then  
\[
 T_{\kappa_{1}}(s, \sqrt{\kappa_{1}} x)-s \leq T_{\kappa_{2}}(s, \sqrt{\kappa_{2}}, x)-s \leqslant T_{4}(s,2 x)-s.
\]
It suffices to prove that almost surely for all $s \in [0,1]$, 
\[
\lim _{x \rightarrow 0+} T_{4}(s, x)-s=0.
\]
Note that $T_k(s,x)$ is monotonic increasing in $x$ and the limit 
\[
 T_4(s, 0+)-s := \lim _{x \rightarrow 0+} T_{4}(s, x)-s
\]
 always exists. We now prove that this limit is zero for all $s \in [0,1]$. To this end, let 
\[x_{n}=\sqrt{\frac{3 e^{-n^3}}{n^{3}}}, \hspace{4mm} k_{n}=e^{n^{3}}, \hspace{4mm} \lambda_n=\frac{1}{n}.\]

\begin{figure}[h] 
\centering
\includegraphics[scale=0.3]{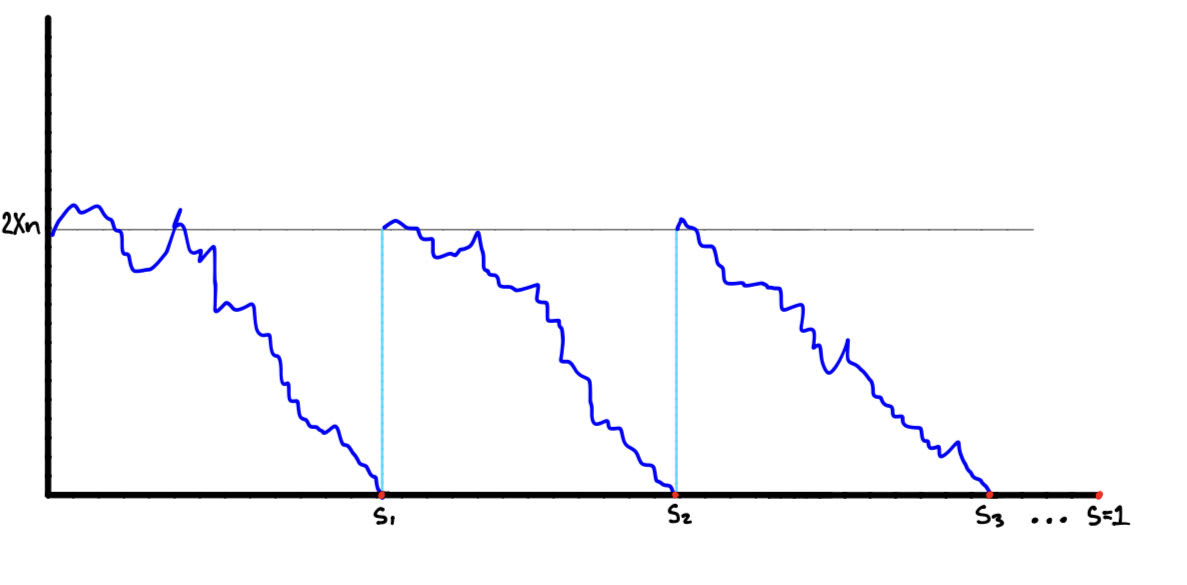}
\caption{Random walk construction of zero-hitting times.}
\label{fig:RW}
\end{figure}

For each $n \geq 1$, define a sequence $\{s^n_k\}_{k\geq 0}$ by $s_0^n=0$, and $s_{k+1}^n=T_4(s_k^n, 2x_n)$, see Figure \ref{fig:RW}. Note that by scaling, Strong Markov Property of the Brownian motion and Lemma \ref{exact-law}, $\{s_k^n\}_{k \geq 0}$ is a random walk with the increments distributed according to $x_n^2\times\text{Inverse-Gamma}(1,1/2)$. Then, Lemma \ref{Lemma 5} implies that  
$$\frac{s^n_{k_n}}{x_n^2 k_n\log k_n} \stackrel{p}{\longrightarrow} \frac{1}{2}.$$
Note that $x_n^2 k_n \log k_n>2$, and since convergence in probability implies almost sure convergence along a subsequence, we obtain that, almost surely, \begin{equation}\label{(7)}
    s^n_{k_n} >1 \hspace{3mm} \mbox{for infinitely many} \hspace{2mm} n.
\end{equation} 

Next, consider the event 
$$ A_{n}:=\bigcup_{k=0}^{k_{n}-1}\left\{s_{k+1}^{n}-s_{k}^{n}>\lambda_{n}\right\}.
$$
Then, using independence and the fact that 
\[
\mathbb{P}\left[\text { \text{Inverse-Gamma}}\left(1, \frac{1}{2}\right) \leq \lambda\right]=\exp \left(\frac{-1}{2 \lambda}\right),
\] 
we get
\[
\mathbb{P}\left[A_{n}\right]=1-\exp \left(\frac{-k_{n} x_{n}^{2}}{2 \lambda_{n}}\right) \leqslant \frac{k_{n} x_{n}^{2}}{2 \lambda_{n}}.
\]
Note that 
\[
\sum_{n=1}^{\infty} \frac{k_{n} x_{n}^{2}}{\lambda_{n}}<\infty,
\] 
and the Borel-Cantelli Lemma implies, almost surely, for all $n$ large enough 
$$
s_{k+1}^{n}-s_{k}^{n} \leqslant \lambda_{n}, \hspace{2mm}  \forall k=0,1, \ldots k_{n}-1.
$$

Now, for any $s \in [0,1]$, using \eqref{(7)}, we can find infinitely many $n$ such that for some $0 \leq k\leq k_n-1$, $s \in [s_k^n, s_{k+1}^n]$. 
Using the flow property and the monotonicity, we obtain that
\[
T_{4}(s,0+)-s \leqslant T_{4}\left(s, h^{4}\left(s_{k}^{n}, s,2 x_{n}\right)\right)-s=
T_{4}(s_{k}^{n}, 2 x_{n})-s \leqslant s_{k+1}^{n}-s_{k}^{n} \leqslant \lambda_{n},
\]
which implies that 
\[
T_{4}(s,0+)-s=0.
\] 

\vspace{6mm}

The fact that $x \mapsto T_\kappa( x)$ on $[0, \infty)$ is strictly increasing follows easily from Lemma \ref{exact-law}-(b). As for its continuity, we first prove the left continuity. For any $x \in (0, \infty),$ if $y \uparrow x$, let $T_{\kappa}(x-):=\lim _{y \uparrow x} T_{\kappa}(y)$. 
If $T_{\kappa}(x-)<T_{\kappa}( x)$, then by taking the monotonic limit of $h^{\kappa}(0,t,y)$ as $y \uparrow x$, we obtain a solution to the \eqref{reverse SDE} starting from $x$ which hits zero before time $T_{\kappa}(x)$. Since \eqref{reverse SDE} has a unique solution, this gives a contradiction. Thus, $T_{\kappa}(x-)=T{\kappa}(x)$. 

For the right-continuity of $T_{\kappa}(x)$, for any $0 \leq x<y <\infty$, using again the flow property, we have that that 
\[
T_{\kappa}(y)-T_{\kappa}( x)=T_{\kappa}(T_{\kappa}(x), h^{\kappa}\left(0, T_{\kappa}(x), y\right))-T_{\kappa}(x).
\]
Also, as $ y \downarrow x$, a similar monotonicity argument as above implies that $h^{\kappa}\left(0, T_{\kappa}(x), y\right) \rightarrow 0$. Thus, \eqref{4} implies that $\lim _{y \rightarrow x+} T_{\kappa}(y)=T_{\kappa}(x)$, finishing the proof.

\end{proof}
 
\begin{proof}[Proof of Proposition \ref{Prop 1}-(b)]
We first check the continuity in $\kappa$ at $\kappa=\kappa_0 \in (0,4]$. Since $T_{\kappa}( x)= \zeta_{\delta}( x / \sqrt{\kappa})$, it suffices to check the continuity of $\zeta_{\delta}(\cdot)$ in $\delta$. Note that if $0 <\kappa_1<\kappa_2 \leq 4$, then  $\zeta_{\delta_{1}}(x)\leq \zeta_{\delta_{2}}(x)$. If either $\kappa\downarrow \kappa_0$ or $\kappa \uparrow \kappa_0$, we will establish the pointwise convergence 
$\zeta_{\delta}(x)\rightarrow \zeta_{\delta_{0}}(x)$. Then, by Dini's Theorem, we obtain the uniform convergence on compact sets. 

For pointwise convergence, let $\kappa \uparrow \kappa_0$ first. Note that $Z^{\delta}(0, t, x)$ is monotonically increasing with $\delta$ (or $\kappa$). If $\lim _{\kappa \uparrow \kappa_{0}} \zeta_{\delta}(x)<\zeta_{\delta_{0}}(x)$, then by taking the limit function $\lim _{\kappa \uparrow \kappa_{0}} Z^{\delta}(0, t, x),$ one can construct a solution to \eqref{Bessel-eqn} with $\delta=1-\frac{4}{\kappa_0}$ started from $x$ which hits zero before $\zeta_{\delta_0}(x)$, which is a contradiction. 

For $\kappa \downarrow \kappa_0$, using the flow property, 
\[
 \zeta_{\delta}(x)-\zeta_{\delta_{0}}(x) =\zeta_{\delta}(\zeta_{\delta_{0}}(x), Z^{\delta}\left(0, \zeta_{\delta_{0}}(x), x\right))-\zeta_{\delta_{0}}(x).
\]

Again, using a similar argument as before, it is easy to check that $Z^{\delta}\left(0, \zeta_{\delta_0}(x), x\right) \rightarrow 0$ as $\kappa \downarrow  \kappa_0$. Using \eqref{4} again implies $\lim _{\kappa \downarrow \kappa_{0}} \zeta_{\delta}(x)=\zeta_{\delta_0}(x)$.

The continuity in $\kappa$ at $\kappa =0$ requires a different argument. Note that for $t \leq T_{\kappa}(x)$, 
\[
h^{\kappa}(0,t,x)=x+\sqrt{\kappa} B_{t}-\int_{0}^{t} \frac{2}{h^{\kappa}(0,r,x)} d r \leq x+\sqrt{\kappa} \sup_{t \leq T_{\kappa}(x)}B_t -\int_{0}^{t} \frac{2}{h^{\kappa}(0,r,x)} d r. 
\]
Then, Lemma \ref{gronwall} implies that 
\[
h^{\kappa}(0,t,x) \leq \sqrt{(x+\sqrt{\kappa} \sup_{t \leq T_{\kappa}(x)}B_t)^2-4t}.
\]
Thus, 
\[
T_{\kappa}(x) \leq \frac{(x+\sqrt{\kappa}\sup_{t \leq T_{\kappa}(x)}B_t)^2}{4}.
\]
Similarly, 
\[
T_{\kappa}(x) \geq \frac{(x+\sqrt{\kappa}\inf_{t \leq T_{\kappa}(x)}B_t)^2}{4}.
\]
Using Lemma \ref{12}, we conclude that $T_{\kappa}(x) \rightarrow \frac{x^2}{4}$ uniformly on compact sets as $\kappa \to 0+$, completing the proof. 
\end{proof}


\bibliographystyle{plain}

\end{document}